\documentclass[11pt,a4paper]{article}

\usepackage{geometry}
\usepackage{enumitem}
\usepackage{tikz-cd}
\usepackage{amssymb}
\usepackage{amscd}
\usepackage{nccmath}
\usepackage{dsfont}
\numberwithin{equation}{section}
\usepackage{amsthm}
\usepackage{mathtools}
\usepackage[hidelinks]{hyperref}

\usepackage[style=alphabetic]{biblatex}
\renewbibmacro{in:}{}
\DeclareFieldFormat[article]{title}{#1\isdot}

\newcommand{\C}{\mathbb{C}}
\newcommand{\D}{\mathbb{D}^2}
\newcommand{\Z}{\mathbb{Z}}
\newcommand{\N}{\mathbb{N}}
\newcommand{\mane}{\text{Mañé }}
\newcommand{\di}{\text{d}}
\newcommand{\e}{\text{e}}
\renewcommand{\P}{\mathbb{P}}

\newcounter{exmp}
\newtheorem{preprop}[exmp]{Proposition}
\newtheorem{prelem}[exmp]{Lemma}
\newtheorem{theo}[exmp]{Theorem}
\theoremstyle{definition}
\newtheorem{rmk}[exmp]{Remark}

\title{Dimension of the Green measure of skew products\\ with dominant exponent in the fibers\thanks{This document is the English translation of \url{https://hal.science/hal-05779901v1} released on October 5, 2026}}
\author{Clara Genes}

\begin{document}
\maketitle
\begin{abstract}
\noindent Let $f(z,w)=(p(z),q(z,w))$ be a polynomial skew product of $\C^2$ of degree $d\geqslant 2$ and let $\mu$ and $\mu_p$ be the Green measures associated to $f$ and $p$ respectively. Let $\lambda_1 > \lambda_2$ and $\lambda_p$ be the Lyapunov exponents of $(f,\mu)$ and $(p,\mu_p)$ respectively. We show that if $\lambda_2 = \lambda_p$, then $\mu$ is exact dimensional and: 
\begin{equation*}
  \dim_H\mu
  =\frac{\log d}{\lambda_1}+\frac{\log d}{\lambda_2}.
\end{equation*}
The proof relies on the continuity of the conditional measures obtained by Jonsson, on the fact that $\mu$ has constant Jacobian $d^2$, and on a fibered version of a normal form theorem for the inverse branches of the iterates of $f$.
\end{abstract}
\noindent MSC: 37C45; 37F10\\ Keywords: holomorphic dynamical systems; Lyapunov exponents; normal forms; dimension theory.
\section{Introduction}
\subsection{Main result} Let $f$ be a smooth map acting on a compact Riemannian manifold $M$ and let $\nu$ be an $f$-invariant measure on $M$. We define the lower and upper pointwise dimensions of $\nu$ by 
\begin{equation}\label{dimponct}
  \underline{\delta}_\nu(x) \coloneqq \liminf_{r\to0}\frac{\log\nu(B(x,r))}{\log r} \qquad \text{and} \qquad \overline{\delta}_\nu(x) \coloneqq \limsup_{r\to0}\frac{\log\nu(B(x,r))}{\log r}.
\end{equation} When these two quantities coincide, we call $\delta_\nu(x) \coloneqq \lim_{r\to 0} \log \nu(B(x,r))/\log r$ the pointwise dimension of $\nu$. By Young \cite{Young1982}, if $a\leqslant \delta_\nu\leqslant b$ holds $\nu$-almost everywhere, then the Hausdorff dimension of $\nu$ (defined as the infimum of the Hausdorff dimensions of sets of full $\nu$ measure) satisfies $a\leqslant \text{dim}_H(\nu)\leqslant b$. In particular, if the pointwise dimension exists and is constant $\nu$-almost everywhere, we have $\text{dim}_H(\nu) = \delta_\nu$. We refer to the book of Pesin \cite{Pesin1998} for an introduction to dimension theory in dynamical systems.

Given a dynamical system $(f,\nu)$, there are relations between the dimension of $\nu$, the Lyapunov exponents $\lambda_k \leqslant \cdots \leqslant \lambda_1$ of the system and its entropy $h_\nu$ (see \cite{Ledrappier1981, Pesin1998, LedrappierYoung1985}). In this paper, we are interested in holomorphic endomorphisms of $\mathbb{CP}^2$ of degree $d\geqslant 2$. These maps define non-invertible branched coverings of topological degree $d^2.$ We refer to the survey of Dinh-Sibony \cite{DinhSibony2010} for an overview of their dynamical properties. The question of the Hausdorff dimension of the equilibrium measure was raised by Fornæss-Sibony \cite{FornaessSibony2001}.

In the case of a rational map on $\mathbb{CP}^1$, Manning \cite{Manning1984}, in the hyperbolic case, then \mane \cite{Mane1988}, in the general case, proved the formula
\begin{equation} \label{manee}
    \delta = \frac{h_\nu}{\lambda} \quad \nu\text{-a.e.},
\end{equation}
for every ergodic measure satisfying $h_\nu > 0.$ Here, $\lambda = \int\log \lvert f'\lvert\di\nu$ denotes the unique Lyapunov exponent. The proof of \mane relies on Koebe's distortion Theorem and the Brin-Katok Theorem. In the case of an endomorphism of $\C\P^2$, a major difficulty is that the dynamical balls are no longer Riemannian balls because the ergodic invariant measures can have two different Lyapunov exponents.  Young \cite{Young1982} first proved, in the case of surfaces with $f$ a regular diffeomorphism and $\lambda_2 < 0 < \lambda_1$, the formula
\begin{equation*}\delta = \frac{h_\nu}{\lambda_1} - \frac{h_\nu}{\lambda_2}\quad \nu\text{-a.e.}.\end{equation*} Our main theorem is the following.
\begin{theo}\label{A}
     Let $f:(z,w)\in \C^2 \mapsto (p(z), q(z,w))\in \C^2$ be a polynomial skew product of algebraic degree $d\geqslant 2$. Let $\mu$ be the Green measure associated to $f$ and $\lambda_1 > \lambda_2$ its Lyapunov exponents such that $\lambda_2$ is the Lyapunov exponent of $(p, \mu_p)$. Then, for $\mu$-almost every $x\in\C^2$, we have 
     \begin{equation*}
        \lim_{r\to0}\frac{\log\mu\left(\D(x, r)\right)}{\log r} =  \frac{\log d}{\lambda_1}+\frac{\log d}{\lambda_2}.
    \end{equation*}
\end{theo}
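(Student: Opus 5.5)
The strategy is to split the bidisk into a base direction and a fiber direction and treat each with a Mañé-type argument in one complex variable. Write $x=(z,w)$ and $\D(x,r)=\mathbb{D}(z,r)\times\mathbb{D}(w,r)$. Since $\lambda_2=\lambda_p$ is the base exponent, the exponent in the fibers is $\lambda_1$; this is the fiber-dominant case of the title. Disintegrate $\mu=\int \mu_z\,\di\mu_p(z)$ along the vertical fibers. By Jonsson, $\mu_z$ is the fiberwise Green measure $\Delta G_z$ on $J_z$, and $z\mapsto\mu_z$ is weakly continuous. It then suffices to prove, for $\mu$-a.e. $x$,
\begin{equation*}
\mu\left(\D(x,r)\right)=\int_{\mathbb{D}(z,r)}\mu_{z'}\left(\mathbb{D}(w,r)\right)\di\mu_p(z')\approx r^{\frac{\log d}{\lambda_2}}\cdot r^{\frac{\log d}{\lambda_1}}
\end{equation*}
up to factors $r^{\pm\varepsilon}$. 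The base factor is Mañé's formula \eqref{manee} for $(p,\mu_p)$, which has entropy $\log d$ and exponent $\lambda_p=\lambda_2$.

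For the fiber factor, I would use inverse branches. Let $n\approx \log(1/r)/\lambda_1$, and work on a Pesin set of large measure, chosen via Lusin so that continuity of $\mu_z$ is uniform there. On this set there are inverse branches $f^{-n}_{x}$ of $f^n$ defined on a ball of fixed size $\rho$ around $f^n(x)$. The fibered normal form theorem then conjugates $f^{-n}$ on that ball to maps of the form $(z,w)\mapsto(a_n z,\, b_n w+ \text{correction})$, where $|a_n|\approx e^{-n\lambda_2}$ and $|b_n|\approx e^{-n\lambda_1}$ up to $e^{\pm n\varepsilon}$ and the correction is controlled. This holds uniformly on the Pesin set, with tempered distortion. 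Consequently the image of a fixed-size bidisk under $f^{-n}_x$ is comparable to a box of size $e^{-n\lambda_2}\times e^{-n\lambda_1}$, fibered over a base disk. Because $\mu$ has constant Jacobian $d^2$, pushing forward along the branch gives $\mu(f^{-n}_x(B))=d^{-2n}\mu(B)$. Using the skew structure, the base map $p^n$ has Jacobian $d$ for $\mu_p$, so fibers carry $d^{-n}$ of the mass: $\mu_{z}(f^{-n}_x\text{-image of a fiber disk})=d^{-n}\mu_{p^n(z)}(\text{disk})$.

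The plan then runs as follows. \emph{Step 1:} record the base dimension via Mañé for $\mu_p$. \emph{Step 2:} show $\mu_{z'}(\mathbb{D}(w',r))\approx r^{\log d/\lambda_1\pm\varepsilon}$ uniformly for $z'\in\mathbb{D}(z,r)$ and $w'$ near $w$. For this, pull back by the fibered normal form. The fiber disk of radius $r$ corresponds under $f^n$ to a disk of radius about $\rho$ in the fiber over $p^n(z')$, which has $\mu_{p^n(z')}$-mass bounded above by $1$. Continuity of conditional measures together with recurrence to the Pesin set bounds this mass below by a positive constant $c$. \emph{Step 3:} integrate Step 2 over the base and combine with Step 1 to obtain both inequalities, then let $\varepsilon\to0$.

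The main obstacle is Step 2's lower bound, together with uniformity across neighboring fibers $z'\neq z$. The inverse branch is built along the orbit of $x$, yet it must control the mass of $\mu_{z'}$ in fibers over all $z'$ in a base disk of radius $r\ll e^{-n\lambda_2}$. The natural choice $n\approx\log(1/r)/\lambda_1$ shrinks the base by only $e^{-n\lambda_2}\gg r$, so the base disk of radius $r$ is mapped by $p^n$ to a tiny disk around $p^n(z)$, of radius $re^{n\lambda_2}\to0$. Hence all the relevant fibers land near the single fiber over $p^n(z)$, and weak continuity of $z\mapsto\mu_z$ gives a uniform positive mass there provided $f^n(x)$ lies in a good set. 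Here continuity is exactly what is needed. To handle the dependence on $n$, I would use a Borel–Cantelli / ergodic-frequency argument to choose good times. Sandwiching $r$ between $\rho e^{-(n+1)\lambda_1}$ and $\rho e^{-n\lambda_1}$ at good times then handles intermediate radii up to $e^{\pm n\varepsilon}$.
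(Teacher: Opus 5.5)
Your proposal is correct and is essentially the paper's argument. The shared ingredients are disintegration along vertical fibers, the fibered normal form (which you take as given, but which the paper has to prove in Section~3), Jonsson's continuity of $z\mapsto\mu_z$ at good return times, the constant Jacobian, and Ma\~n\'e's formula in the base; the normal form is what pulls back a rectangle of size $e^{-n(\lambda_1-\lambda_2)}\times r_0$ around $x_n$ into a bidisc of radius $\approx e^{-n\lambda_1}$ around $x_0$. The differences are bookkeeping only: you use the fiberwise relation $q_z^*\mu_{p(z)}=d\,\mu_z$ and Ma\~n\'e at $z_0$ with radius $\approx e^{-n\lambda_1}$, where the paper uses the Jacobian $d^2$ of $\mu$ on $R_m$ and Ma\~n\'e at $z_m$ with radius $r_0e^{-m(\lambda_1-\lambda_2)}$ (equivalent, since $\mu_p$ has Jacobian $d$), and you also sketch the reverse inequality that the paper quotes from Dupont; note only that the pulled-back set is a sheared tube along the slow Oseledets direction rather than a product box, so your fiberwise lower bound holds for $z'$ in a base disc of radius $r^{1+O(\varepsilon)}$ rather than on all of $\mathbb{D}(z,r)$, which is harmless.
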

Let us note that by Jonsson \cite{Jonsson1999} (see also Dupont-Taflin \cite{DupontTaflin2021}), the Lyapunov exponent of $(p, \mu_p)$, denoted $\lambda_p$ is one of the two exponents of the global system $(f,\mu)$. The proof of Theorem \ref{A} is outlined in Section \ref{intro1.2}. The lower bound $\delta_\mu \geqslant \frac{\log d}{\lambda_1}+\frac{\log d}{\lambda_2}$ $\mu$-almost everywhere was established by Dupont \cite{Dupont2011}. In this article we prove the other inequality. Let us note that the articles \cite{BinderDeMarco2003, DinhDupont2004, DupontRogue2020, BarreiraPesinSchmeling1999} and \cite{DeThelinVigny2015} obtain estimates of $\text{dim}_H(\mu)$ for general maps, which are not skew products. Our result shows in particular that the formula for $\text{dim}_H(\mu)$ conjectured by Binder-De Marco \cite{BinderDeMarco2003} is true in the case of skew products, when the expansion is dominant in the fibers. Being local, the same arguments of the proof of Theorem \ref{A} apply more generally to fibered endomorphisms of $\C\P^2$ which preserve a fibration \cite{DupontTaflin2021} along which the Lyapunov exponent of $\mu$ is maximal and equal to $\lambda_1$. In the general non-fibered case, it is known that if $\lambda_2 = \frac12\log d$, then $\dim_H\mu=\frac{\log d}{\lambda_1}+\frac{\log d}{\lambda_2},$ see \cite{DupontTaflin2021, DinhDupont2004}.
\subsection{Skew products and proof strategy}\label{intro1.2}
Skew products are maps $f:\C^2\to\C^2$ of the form 
\begin{equation*}
    f(z,w) = (p(z),q(z,w)),
\end{equation*}
where $p$ and $q$ are polynomials of the same degree $d\geqslant 2$ in $z$ and $w$ respectively such that $f$ extends to a holomorphic endomorphism of $\C\P^2$. An important property of skew products is that they preserve the vertical lines of $\C^2$: the restriction of $f^n$ to $\{z\}\times\C$ can be seen as the composition of $n$ polynomials of $\C$. Their dynamics along the fibers therefore generalizes in a non-autonomous way that of polynomials in dimension 1. In particular, we automatically have two dynamical systems: $(f,\mu)$ global system with $\mu$ the Green measure associated to $f$ and $(p,\mu_p)$ system in the base with $\mu_p = \pi_\star\mu$ the projection of the measure $\mu$ which is the Green measure associated to the polynomial $p$. The disintegration of $\mu$ (\cite{Rokhlin1949, Rokhlin1967}) gives a family of measures $\{\mu_z, \, z\in\C\}$ whose supports are in the vertical lines of $\C^2$:
\begin{equation}\label{desintegrintro}
    \mu=\int \mu_z\,\di\mu_p(z).
\end{equation}
In \cite{Jonsson1999}, Jonsson identified the measures $\mu_z$: they satisfy $\mu_z = \Delta_w G(z,w)$ with a proper normalization where $G$ is the Green function of $f$. This function being continuous on $\C^2$, it follows that $z\mapsto \mu_z$ is weak-* continuous.

The proof of Theorem \ref{A} relies on Jonsson's formula \eqref{desintegrintro} with the continuity of the family of conditional measures, the fact that the measure $\mu$ has constant Jacobian $d^2$ and on a normal form theorem of inverse branches, which we adapted to the skew product case. This theorem is central in our approach, its proof follows the strategy developed in \cite{BertelootDupontMolino2008, BertelootDupont2019, JonssonVarolin2002}.

Let us be more precise. For $x=(z,w)\in\C^2$ and $r>0$, we denote $\D(x,r)\coloneqq B(z,r)\times B(w,r)$. Since we have $B^2(x,r)  \subset \D(x,r) \subset B^2(x,\sqrt2r),$ the pointwise dimensions \eqref{dimponct} can be computed using bi-discs. Thus, in order to reach the dimension of $\mu$, we take a point $x_0=(z_0,w_0)\in\C^2$ generic for $\mu$, a small radius $r$ and we study the following equality: 
\begin{equation}
    \mu(\D(x_0,r)) = \int_{B(z_0,r)}\mu_z(B(w_0,r))\di\mu_p(z).\label{equa1}
\end{equation} 
If $z_0$ is generic for $\mu_p$, then according to the formula of \mane (\ref{manee}), $\mu_p(B(z_0,r))=r^{\log d/\lambda_p+o(1)}$. A technical lemma (Lemma \ref{cont0}) based on the weak-* continuity of $z\mapsto\mu_z$ gives a uniform lower bound 
\begin{equation}
    \mu_z(B(w_0,r))\geqslant c_0, \label{equa2}
\end{equation} 
for every $z$ close to $z_0$. According to the general normal form theorem: the inverse branch $f^{-m}_{\hat x_m}$ is well defined on the rectangle $R_m = B(z_m, r_0\e^{-m(\lambda_1-\lambda_2)})\times B(w_m, r_0)$. Thanks to our fibered version of this theorem, the inverse branch sends the rectangle $R_m$ to a bi-disc of the form $\D(x_0, \ell_m\e^{-m\lambda_1})$. We can now use the constant Jacobian of $\mu$ by injectivity of $f^m$ on this last bi-disc. Putting together these different arguments and \eqref{equa1}, we obtain \begin{equation*}
   \mu(\D(x_0,\ell_m\e^{-m\lambda_1})) \geqslant \mu(f^{-m}_{\hat x_m}(R_m)) = d^{-2m}\mu(R_m) \geqslant c_0d^{-2m}\left(\e^{-m(\lambda_1-\lambda_2)}\right)^{\log d/\lambda_2}.
\end{equation*}
This indeed gives $\mu\left(\D(x_0, \e^{-m\lambda_1})\right) \geqslant (\e^{-m\lambda_1})^{\frac{\log d}{\lambda_1}+\frac{\log d}{\lambda_2}}.$ This inequality extends to every radius $r$ by using upper and lower bounds. This proves Theorem \ref{A}. 
\paragraph{Acknowledgments.} I would like to thank my supervisors Christophe Dupont and Fabrizio Bianchi for their kindness and for their useful advice.
\paragraph{No AI declaration.} The author did not use any AI tools during the preparation of this article.
\section{Lyapunov exponents and fibered normal form theorem}\label{Section2}
Fix $f$ an endomorphism of $\C\P^2$. To overcome the lack of invertibility of the dynamical system $(f,\mu)$, we introduce its natural extension, see \cite[Chapter~10.4]{CornfeldFominSinai1982} for a more detailed construction. Let $O \coloneqq \left\{\hat{x}=(x_n)_{n\in\mathbb{Z}} \; ; \; x_{n+1}=f(x_n)\right\}$ be the set of orbits of $f$. We denote by 
\begin{equation*}
    \begin{array}{cccc}
        \tau : & O&\to &O \\
         & (\ldots,x_{-1},x_0,x_1,\ldots)&\mapsto &(\ldots,x_{-2},x_{-1},x_0,\ldots)
    \end{array}
\end{equation*} 
the right shift and by $\hat f = \tau^{-1}$ the left shift. Let $\hat{\mu}$ be the unique $\hat f$-invariant measure on $O$ satisfying $(\pi_0)_\ast \hat{\mu} = \mu,$ where $\pi_0(\hat{x}) \coloneqq x_0.$ Since $\mu$ is a probability measure and is mixing, the same holds for $\hat \mu$. We know that $\mu$ does not charge the critical set of $f$ \cite{DinhSibony2010}. In particular, the $\tau$-invariant subset $X \coloneqq \left\{\hat{x}\in O \; ; \; x_n \notin \text{Crit}(f),\ \forall n\in\Z\right\}$ satisfies $\hat{\mu}(X)=1.$ For every $\hat{x}\in X$, we denote by $f_{\hat{x}}^{-n}$ the inverse branch of $f^n$ which sends $x_0$ to $x_{-n}$. It is defined in a neighborhood of $x_0$ in $\C^2$ with values in a neighborhood of $x_{-n}$ in $\C^2$.
\subsection{Lyapunov exponents for skew products}
We denote 
\begin{equation*}
    \lambda_p\coloneqq\int\log|p'(z)|\di\mu_p(z)\quad \text{ and }\quad \lambda_q \coloneqq \int\log|q'_z(w)|\di\mu(z,w).
\end{equation*}
We note that $\lambda_p$ is the Lyapunov exponent of the system $(p,\mu_p)$. We have the following theorem.
\begin{theo}\label{Oseledets}
    Let $\lambda_1 > \lambda_2$ be the Lyapunov exponents of $(f,\mu)$ and $\lambda_p$ the Lyapunov exponent of $(p,\mu_p)$. Then $\{\lambda_1,\lambda_2\} = \{\lambda_p, \lambda_q\}.$ Moreover, if $\lambda_2 = \lambda_p$, then the Oseledets space associated to $\lambda_q$ is $\{0\}\times\C$. 
\end{theo}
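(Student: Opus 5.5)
The plan is to exploit the triangular form of the derivative. For $x=(z,w)$,
\begin{equation*}
Df(x)=\begin{pmatrix} p'(z) & 0\\ \partial_z q(z,w) & q_z'(w)\end{pmatrix},
\end{equation*}
so the vertical line $E^v\coloneqq\{0\}\times\C$ is $Df$-invariant. Moreover $Df^n(x)$ is lower triangular with diagonal entries $(p^n)'(z)=\prod_{k<n}p'(z_k)$ and $\prod_{k<n}q'_{z_k}(w_k)$.

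The first step is the growth of $Df^n$ along $E^v$. The restriction of $Df^n(x)$ to $E^v$ is multiplication by $\prod_{k<n}q'_{z_k}(w_k)$. By Birkhoff's theorem (using $\mu$ mixing, hence ergodic, and $\log|q'_z(w)|\in L^1(\mu)$), we get
\begin{equation*}
\frac1n\log\|Df^n(x)|_{E^v}\|\to\lambda_q \quad \mu\text{-a.e.}
\end{equation*}
Integrability holds because $\log|q'_z(w)|$ is a difference of a continuous function and a log of a polynomial, whose integral against $\mu$ is finite since $\mu$ has continuous potentials. Hence $\lambda_q$ is a Lyapunov exponent of $(f,\mu)$, realized by the invariant line $E^v$.

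The second step identifies the exponent of the quotient. The induced map on $\C^2/E^v\simeq\C$ is multiplication by $p'(z)$, and $\pi_*\mu=\mu_p$. Birkhoff applied to $\log|p'\circ\pi|$ then shows the quotient cocycle has exponent $\lambda_p$. For an invertible-a.e. $2\times2$ cocycle with an invariant line, the Lyapunov spectrum consists of the exponent on the line together with the exponent on the quotient. This follows from $\log|\det Df^n|=\log|(p^n)'|+\log|(q^n)'|$, where $\lambda_1+\lambda_2$ is the average of $\log|\det|$ and one exponent is already $\lambda_q$. Therefore $\{\lambda_1,\lambda_2\}=\{\lambda_p,\lambda_q\}$. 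Since the theorem assumes $\lambda_1>\lambda_2$, the two values are distinct.

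The third step treats the case $\lambda_2=\lambda_p$, so that $\lambda_q=\lambda_1>\lambda_2$. By Oseledets, the vectors $v$ with $\frac1n\log\|Df^n v\|\to\lambda_1$ are exactly those outside the slow subspace $E_2(x)$. The Oseledets space for $\lambda_1$ depends on the past, and is well defined via the natural extension $\hat\mu$ from the setup above; the direction we must identify is this unstable one. I would argue as follows:
\begin{itemize}
\item The unstable space $E_1(\hat x)$ is characterized as the set of vectors $v$ with $\frac1n\log\|Df^{-n}_{\hat x}v\|\to-\lambda_1$.
\item Along $E^v$, backward iteration uses the inverse branches $f^{-n}_{\hat x}$, and the derivative is $\prod_{k=1}^{n}q'_{z_{-k}}(w_{-k})^{-1}$.
\item Birkhoff for $\hat f^{-1}$ on $\hat\mu$ gives rate $-\lambda_q=-\lambda_1$.
\end{itemize}
The invariant field $E^v$ thus has both forward and backward rate $\lambda_1$. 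By uniqueness of the Oseledets splitting, $E^v$ is the Oseledets space associated to $\lambda_q$.

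The main obstacle is making this identification rigorous. The forward filtration alone only shows that $E^v$ is not the slow direction, and the genuine invariance of the unstable space requires the two-sided splitting on $X$. The key point is that an $\hat f$-invariant measurable line field with two-sided exponent $\lambda_1$ must coincide with the $\lambda_1$-Oseledets space. This follows from the growth characterization of the splitting in the two-sided Oseledets theorem.
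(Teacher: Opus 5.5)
Your proposal is correct. Its first half takes a different route from the paper, and its second half is essentially the paper's argument.

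\textbf{First claim, $\{\lambda_1,\lambda_2\}=\{\lambda_p,\lambda_q\}$.} Both routes use the determinant identity $\lambda_1+\lambda_2=\lambda_p+\lambda_q$, but they differ in which exponent is placed in the spectrum first.

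The paper imports from Jonsson and Dupont--Taflin the fact that the base exponent $\lambda_p$ belongs to the spectrum of $(f,\mu)$. The determinant identity then forces the other exponent to be $\lambda_q$.

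You argue instead that $\lambda_q$ belongs to the spectrum directly:
\begin{itemize}
\item the vertical direction $(0,1)$ is $Df$-invariant;
\item by Birkhoff it grows at rate $\lambda_q$;
\item by the forward (non-invertible) Oseledets theorem, the growth rate of every nonzero vector is one of $\lambda_1,\lambda_2$.
\end{itemize}
The determinant identity then leaves $\lambda_p$ as the other exponent. This uses only the triangular structure of $Df$ and makes the statement self-contained. It is the same Birkhoff computation the paper performs anyway in its second half, so the external citation becomes unnecessary. Your remark about the quotient cocycle having exponent $\lambda_p$ is not needed once you have the determinant argument.

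\textbf{Second claim.} Here your argument coincides with the paper's. Since $D(f^{-n}_{\hat x})(x_0)=(Df^n(x_{-n}))^{-1}$ is lower triangular, Birkhoff along backward orbits in the natural extension gives backward rate $-\lambda_q=-\lambda_1$ on $\{0\}\times\C$. Your justification of the identification is slightly sharper than the paper's filtration remark. In the two-sided splitting $\C^2=E_1(\hat x)\oplus E_2(\hat x)$, any vector with a nonzero $E_2$-component has backward rate $-\lambda_2>-\lambda_1$. Hence the backward rate $-\lambda_1$ alone characterizes $E_1(\hat x)$.

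\textbf{Minor point.} Your integrability justification is garbled: $q'_z(w)=\partial_w q(z,w)$ is itself a polynomial, not a difference of a continuous function and the log of a polynomial. The conclusion $\log|\partial_w q|\in L^1(\mu)$ is still correct. It follows from $\mu$ having continuous potentials, or from $\log|\det Df|=\log|p'\circ\pi|+\log|\partial_w q|$ with $\log|\det Df|\in L^1(\mu)$ and both summands bounded above on $\mathrm{supp}\,\mu$.
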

\begin{proof}
    We know that $\log \lvert p'\lvert $ is integrable against $\mu_p$ since the latter admits a continuous Green potential. The Birkhoff theorem gives 
    \begin{equation*}
        \frac1n\sum_{k=0}^{n-1}\log|p'(p^k(z))| \overset{n\to\infty}{\longrightarrow}\lambda_p. 
    \end{equation*}
    for $\mu_p$-almost every $z\in\C$. Similarly, the sum $\Lambda_f \coloneqq \lambda_1+\lambda_2$ is defined for $\mu$-almost every $x\in\C^2$ by:
    \begin{equation*}
        \frac1n\sum_{k=0}^{n-1}\log|\det Df_{f^k(x)}| \overset{n\to\infty}{\longrightarrow}\Lambda_f \coloneqq \int\log|\det Df_x|\di\mu(x).
    \end{equation*}
    Since $f$ is a skew product, for every $(z,w)\in\C^2$, its Jacobian matrix is of the form 
    \begin{equation}
        Df_{(z,w)}=\begin{pmatrix}
            p'(z)&0\\\partial_zq(z,w)&\partial_wq(z,w)
        \end{pmatrix}.\label{jacobien}
    \end{equation}
    Thus, 
    \begin{equation*}
        \lambda_1+\lambda_2 = \Lambda_f = \int\log|\det Df_x|\di\mu(x) = \int\log|p'(z)|\di\mu_p(z) + \int\log|q'_z(w)|\di\mu(z,w) = \lambda_p+\lambda_q.
    \end{equation*}
    However, since the Lyapunov exponent of $\mu_p$ is one for $\mu$ (see \cite{Jonsson1999, DupontTaflin2021}), we obtain $\lambda_p\in\{\lambda_1,\lambda_2\}$. Thus $\{\lambda_1,\lambda_2\} = \{\lambda_p, \lambda_q\}.$ We set, for almost every $\hat x\in X$, 
    \begin{equation*}
        H_1(\hat x) = \text{vect}\begin{pmatrix}
        a(\hat x)\\b(\hat x)
        \end{pmatrix} \quad \text{ and }\quad  H_2 = \text{vect}\begin{pmatrix}
         0\\1
         \end{pmatrix}.
    \end{equation*} 
    with $a(\hat x)\neq0$. Let us observe that this gives a filtration $\{0\}\subsetneq H_1(\hat x)\subsetneq \text{vect}\left\{H_1(\hat x), H_2\right\} = \C^2.$ Moreover, for every $ v \in H_2\setminus\{0\}$, 
    \begin{equation*}
        \lim_{ n\to+\infty}\frac1{n}\log|Df^n(x_0)v|= \lambda_1 \quad \text{ and } \quad \lim_{ n\to+\infty}\frac1{n}\log|D(f^{-n}_{\hat x})(x_0)v|= -\lambda_1.
    \end{equation*}
    Indeed, from the form of the matrix (\ref{jacobien}) and since we have $\lambda_p = \lambda_2 < \lambda_1=\lambda_q$, we obtain by the Birkhoff theorem, for $\hat\mu$-almost every $\hat x \in X$,  
    \begin{equation*}   
        \frac1{n}\log\left|D(f^{-n}_{\hat x})(x_0)\begin{pmatrix} 0\\1\end{pmatrix}\right| = \frac1n\log \left\lvert\prod_{j=1}^{n}\left(\partial_wq(x_{-j})\right)^{-1}\right\lvert \overset{n\to\infty}{\longrightarrow}\int\log\left\lvert\partial_wq(x_{0})\right\lvert^{-1}\di\hat\mu(\hat x) = -\lambda_q = -\lambda_1.
    \end{equation*}
    The other limit is proved with the same Birkhoff argument. In particular, the Oseledets space associated to $\lambda_1$ is constant and given by $H_2$. Let us also note that, for $\hat \mu$-almost every $\hat x \in X$, there exist $a(\hat x)\neq 0$ and $b(\hat x)$ such that the Oseledets space associated to $\lambda_2$ is $H_1(\hat x)$. 
\end{proof}
\subsection{Normal form theorem and uniformization}
A function $\phi_\varepsilon : O \to ]0,1]$ (resp.  $O\to [1,+\infty[)$ is \emph{$\varepsilon$-slow} (resp. \emph{$\varepsilon$-fast}) if for every $\hat{x}\in O$ we have \begin{equation}\label{lente}
    \e^{-\varepsilon}\phi_\varepsilon(\hat{x})\leqslant\phi_\varepsilon(\tau(\hat{x}))\leqslant\e^{\varepsilon}\phi_\varepsilon(\hat{x}).
\end{equation} 
In an analogous way, we define the $\varepsilon$-slow sequences. We have the following normal form theorem.
\begin{theo}[Normal form  \cite{BertelootDupontMolino2008, BertelootDupont2019}]\label{formenormale}
    Let $f$ be a holomorphic endomorphism of $\C\P^2$ and let $\mu$ be its Green measure. Let $\lambda_2 <  \lambda_1$ be the Lyapunov exponents of $(f,\mu)$. Then for every $\varepsilon < \delta \ll \lambda_2$ and for $\hat{\mu}$-almost every $\hat{x} \in X$, there exist:
    \begin{itemize}
        \item an integer $n_{\hat{x}} \geqslant 1$ and real numbers $h_{\hat{x}} \geqslant 1$ and $0 < r_{\hat{x}}, \rho_{\hat{x}} \leqslant 1$,
        \item a sequence $(\varphi_{\hat{x}_n,k})_{k\geqslant0}$ of injective holomorphic maps:
            \begin{equation*}\begin{array}{ccccc}
                \varphi_{\hat{x}_n,k} &: &B\left(x_{n-k},\, r_{\hat{x}_n} \e^{-k(\delta+2\varepsilon)}\right) &\longrightarrow &\D\left(\rho_{\hat{x}_n} \e^{k\varepsilon}\right)   \end{array}
            \end{equation*}
            sending $x_{n-k}$ to $0$ and satisfying for every $u,v \in B\left(x_{n-k},\, r_{\hat{x}_n} \e^{-k(\delta+2\varepsilon)}\right),$
            \begin{equation}\label{distorglobale}
            e^{k(\delta-2\varepsilon)} \mathrm{dist}(u,v) \leqslant |\varphi_{\hat{x}_n,k}(u) - \varphi_{\hat{x}_n,k}(v)| \leqslant e^{k(\delta+3\varepsilon)} h_{\hat{x}_n} \mathrm{dist}(u,v),
            \end{equation}
        \item a sequence of linear maps  $(D_{\hat{x},k})_{k\geqslant0}$ represented by diagonal matrices of the form $\emph{diag}(a^2_{\hat x, k}, a^1_{\hat x, k})$ such that
            \begin{equation*}
                \e^{-k\lambda_i+k(\delta-\varepsilon)} \leqslant |a^i_{\hat{x},k}|\leqslant\e^{-k\lambda_i+k(\delta+\varepsilon)},
            \end{equation*}
    \end{itemize}
    for which the following diagram commutes for every $n\geqslant\hat{n}_{\hat{x}}$:
    \begin{equation*}
        \begin{CD}
            B(x_n, \,r_{\hat{x}_n}) @>{f_{\hat{x}_n}^{-n}}>>B\left(x_{0},\,r_{\hat{x}_n} \e^{-n(\delta+2\varepsilon)}\right)\\
            @V{\varphi_{\hat{x}_n,0}}VV @VV{\varphi_{\hat{x}_n,n}}V\\
            \D(\rho_{\hat{x}_n})@>>{D_{\hat{x}_n,n}}>\D\left(\rho_{\hat{x}_n} \e^{n\varepsilon}\right).
        \end{CD}
    \end{equation*}
    Moreover, the functions $\hat{x} \mapsto h_{\hat{x}}^{-1}, r_{\hat{x}}, \rho_{\hat{x}}$ are measurable and $\varepsilon$-slow on $X$ and the function $n_{\hat x}$ is measurable.
\end{theo}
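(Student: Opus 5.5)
The statement is the normal form theorem of Berteloot--Dupont--Molino and Berteloot--Dupont. My plan is to reproduce their construction in three stages: linearize the inverse branches along the orbit, conjugate the resulting non-autonomous sequence to diagonal linear maps, and control the sizes of all domains with tempered functions.

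\textbf{Step 1: rescaled inverse branches.} On $X$, Oseledets' theorem applies to the inverse cocycle $\hat x\mapsto (Df_{x_{-1}})^{-1}$ over $(\tau,\hat\mu)$. It gives an $\varepsilon$-tempered splitting $E_1(\hat x)\oplus E_2(\hat x)$ and a tempered linear change of coordinates $C_\varepsilon(\hat x)$ sending this splitting to the coordinate axes. In the new coordinates, $Df^{-1}_{\hat x}$ is $\mathrm{diag}(\alpha_2,\alpha_1)$ up to a factor $\e^{\pm\varepsilon}$, with $\lvert\alpha_i\rvert\approx\e^{-\lambda_i}$. Since $\mu$ does not charge the critical set and $\log\lvert\det Df\rvert$ is $\mu$-integrable, the inverse branch $f^{-1}_{\hat x}$ is defined on a ball $B(x_0,\eta(\hat x))$. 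Here $\eta$ is measurable, and by the usual Borel--Cantelli/tempering lemma we may replace it by an $\varepsilon$-slow function. We then define rescaled maps $g_{\hat x}\coloneqq C(\tau\hat x)^{-1}\circ f^{-1}_{\hat x}\circ C(\hat x)$ on $\D(\rho(\hat x))$. Each $g_{\hat x}$ is a holomorphic contraction whose linear part is almost diagonal, and Cauchy estimates give tempered bounds on its higher derivatives.

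\textbf{Step 2: non-autonomous Poincaré--Dulac.} Next we find polynomial maps $P_{\hat x}$, tangent to the identity and of degree bounded in terms of $\lambda_1/\lambda_2$, together with normal forms $N_{\hat x}$, satisfying $P_{\tau\hat x}\circ g_{\hat x}=N_{\hat x}\circ P_{\hat x}$ up to high order. We solve the homological equations degree by degree along the orbit, using Neumann series.

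The exponent ratio determines the only possible resonance. Since $\lambda_2<\lambda_1$, the resonant monomials for contraction rates $(\e^{-\lambda_2},\e^{-\lambda_1})$ are of the form $z^k$ in the second coordinate, with $\lambda_1=k\lambda_2$. This resonance appears only for at most one integer $k$.

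I would expect this step to be the main obstacle: one must show that the remaining non-linear terms can be absorbed. The statement only asks for a linear $D_{\hat x,n}$, so in the non-resonant case it suffices to construct the tempered $P$ and control the series. The key point is to take $\delta$ larger than the resonance gaps, with $\varepsilon<\delta$. This is where $\delta$ enters: the compositions are measured against the slightly weaker rates $\e^{-k\lambda_i+k\delta}$, which gives a spectral gap leaving room for the convergence of the sums $\sum_j \e^{j(\cdot)}$.

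\textbf{Step 3: define $\varphi$ and iterate.} Set $\varphi_{\hat x_n,k}\coloneqq \e^{k\delta}\,\Phi\circ C^{-1}$, where $\Phi$ is the conjugating map obtained as the limit of $D^{-1}_{\hat x,j}\circ g^{(j)}$. Its convergence follows from a telescoping estimate in which the error at step $j$ is of size $\e^{-j(2\lambda_2-\lambda_1-\dots)}$. The rescaling by $\e^{k\delta}$ explains both the distortion bounds \eqref{distorglobale}, with the factor $h$ coming from $\lVert C\rVert$, and the image discs $\D(\rho\,\e^{k\varepsilon})$. The commutative diagram then follows by composing the one-step conjugacies. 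The integer $n_{\hat x}$ is the first time after which the tempered constants are dominated by $\e^{n\varepsilon}$. Finally, measurability and $\varepsilon$-slowness of $r,\rho,h^{-1}$ are inherited from the tempering step.
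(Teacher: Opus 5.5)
Your overall plan (Oseledets--Pesin coordinates, homological equations solved along the orbit, then a limit formula for the linearizing map) matches the architecture the paper uses. However, the idea that makes a \emph{linear} $D_{\hat x,n}$ possible for every $f$ is missing. When $\lambda_1=k\lambda_2$, your Step~2 keeps the resonant monomial $z^k$ (second coordinate) in $N_{\hat x}$, and you only treat ``the non-resonant case''. Step~3 then fails. Take the constant chain $g(z,w)=(az,a^kw+z^k)$ with $|a|=\e^{-\lambda_2}$, so that $A=\mathrm{diag}(a,a^k)$ has the rates $\e^{-\lambda_2},\e^{-\lambda_1}$. One gets $A^{-j}\circ g^{j}(z,w)=(z,\,w+ja^{-k}z^k)$, so $\lim_j D_j^{-1}\circ g^{(j)}$ does not exist. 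If $D_j$ already carries the factor $\e^{j\delta}$, the linear part of $D_j^{-1}\circ g^{(j)}$ is $\e^{-j\delta}\mathrm{Id}$ and the limit is degenerate. Your error bound $\e^{-j(2\lambda_2-\lambda_1-\cdots)}$ has the same defect even without resonance, since it does not decay once $\lambda_1\geqslant 2\lambda_2$. The limit must be taken on the chain obtained after the polynomial changes, whose remainder is $O(q+1)$ with $(q+1)\lambda_2>\lambda_1$; this is the condition $(M\e^{2\varepsilon})^{q+1}<m\e^{-\varepsilon}$ of Proposition~\ref{3.4}.

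The missing step is the rescaling of Lemma~\ref{3.1}. First conjugate by the homotheties $\Delta_n=\e^{n\delta}\mathrm{Id}$, i.e.\ pass to $W_n^\delta=\Delta_{n+1}\circ W_n\circ\Delta_n^{-1}$, whose linear part $\e^{\delta}A_n$ has the shifted exponents $\lambda_j^\delta=\lambda_j-\delta$. For these exponents the resonance disappears, since $k\lambda_2^\delta-\lambda_1^\delta=-(k-1)\delta$. A non-resonant index, however, has shifted defect $(\alpha_1\lambda_1+\alpha_2\lambda_2-\lambda_j)-(|\alpha|-1)\delta$. So $\delta$ must be \emph{small} compared with the non-resonance gap $a$ (otherwise the shift can create new resonances), not ``larger than the resonance gaps'', and large compared with $\varepsilon$. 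This gives the uniform gap $b=\tfrac12\min\{\delta,a\}$, which makes the forward/backward series for the homological equations converge against $\varepsilon$-slow coefficients; the formerly resonant monomial is handled by the backward series. Only after this Poincaré--Dulac step up to degree $[\lambda_1^\delta/\lambda_2^\delta]+1$ (Proposition~\ref{3.2}) does the limit $T_n^2$ converge, and then $\varphi_n=T_n^2\circ T_n^1\circ\Delta_n$. The factor $\e^{n\delta}$ in $\Delta_n$ is precisely the source of three features of the statement: the distortion $\e^{k(\delta\pm\cdots)}$, the domains $B(x_{n-k},r\e^{-k(\delta+2\varepsilon)})$, and the rates $\e^{-k\lambda_i+k(\delta\pm\varepsilon)}$ of $D_{\hat x,k}$. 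Your remark about ``slightly weaker rates'' points this way, but Steps~2--3 never implement it. Your homological equations are written for the unshifted linear parts, where the resonant equation has zero gap and your Neumann series diverge.
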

In the setting of skew products, the maps $\varphi_{\hat x, n}$ are fibered. The following proposition describes this result. The proof is in Section \ref{Section3} and we mention that it remains valid for any skew product $f$ and any ergodic measure $\nu$ with positive Lyapunov exponents $\lambda_1>\lambda_2$.
\begin{preprop}\label{fibre}
    Let $\lambda_1 > \lambda_2$ be the Lyapunov exponents of $(f,\mu)$ such that $\lambda_2$ is the exponent of $(p,\mu_p)$. Then, the sequence $(\varphi_{\hat{x}_n,k})_{k\geqslant 0}$ given by Theorem \ref{formenormale} is fibered. More precisely, by setting $B_{\hat{x}_n,k} \coloneqq B\left(x_{n-k},\, r_{\hat{x}_n} \e^{-k(\delta+2\varepsilon)}\right)$, there exist measurable maps $\hat x\mapsto h'_{\hat x}, k_{\hat x}\geqslant 1$ and sequences of maps $(\psi_{1,k})_{k\geqslant 0}$, $(\psi_{2,k})_{k\geqslant 0}$ such that
    \begin{equation*}\begin{array}{ccccc}
        \varphi_{\hat{x}_n,k} &: &B_{\hat{x}_n,k} &\longrightarrow &\D\left(\rho_{\hat{x}_n} \e^{k\varepsilon}\right)\\
        &&(z,w)  &\longmapsto &(\psi_{1,k}(z), \psi_{2,k}(z,w))
    \end{array}\end{equation*}
    satisfies, for every $z,z' \in \pi\left(B_{\hat{x}_n,k}\right)$,
    \begin{equation}\label{distorbase}
        \e^{k(\delta-2\varepsilon)} \operatorname{dist}(z,z')\leqslant\left|\psi_{1,k}(z)-\psi_{1,k}(z')\right|\leqslant\e^{k(\delta+3\varepsilon)} h'_{\hat{x}_n}\, \operatorname{dist}(z,z'),
    \end{equation}
    and for every $(z_{n-k},u),(z_{n-k},u') \in B_{\hat{x}_n,k}$,
    \begin{equation}\label{distorfibre}
        \e^{k(\delta-2\varepsilon)} \operatorname{dist}(u,u')\leqslant\left|\psi_{2,k}(z_{n-k},u)-\psi_{2,k}(z_{n-k},u')\right|\leqslant\e^{k(\delta+3\varepsilon)} k_{\hat{x}_n}\operatorname{dist}(u,u').
    \end{equation}
\end{preprop}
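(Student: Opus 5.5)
To prove that the charts are fibered, I would go back to how the maps $\varphi_{\hat x_n,k}$ are built in \cite{BertelootDupontMolino2008, BertelootDupont2019}. The construction works on the inverse orbit $(x_{n-k})_{k\geqslant 0}$. First one takes affine charts composed with a linear change of coordinates $L_{\hat x_{n-k}}$ that sends the Oseledets splitting at $x_{n-k}$ to the coordinate axes. The map $f^{-1}$ read in these charts, $g_{k} \coloneqq L_{k+1}\circ f^{-1}_{\hat x_{n-k}}\circ L_k^{-1}$, is then conjugated to its linear part by a sequence of polynomial resonant normal-form changes of coordinates $\theta_k$, tangent to the identity and tempered (slow) in $k$. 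The chart is $\varphi_{\hat x_n,k}=\theta_k\circ L_k\circ(\text{translation})$ on the ball $B_{\hat x_n,k}$. The plan is to check that every ingredient of this construction preserves the class of maps of the form $(z,w)\mapsto(\alpha(z),\beta(z,w))$, which is a group under composition and inversion.

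\textbf{Step 1 (linear part).} By Theorem \ref{Oseledets}, when $\lambda_2=\lambda_p$ the fast space $H_2=\{0\}\times\C$ is vertical and the slow space is $H_1(\hat x)=\mathrm{vect}(a,b)$ with $a\neq0$. The linear map $L$ sending $(a,b)\mapsto e_1$ and $(0,1)\mapsto e_2$ is therefore lower triangular, hence fibered. Since $f$ is a skew product, each inverse branch $f^{-1}_{\hat x}$ is fibered: its base component is the inverse branch of $p$, and its fiber component is the inverse of $q_z$. Hence each $g_k$ is fibered.

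\textbf{Step 2 (normal forms).} The changes $\theta_k$ are obtained by solving the homological equations degree by degree, killing the non-resonant monomials in the Taylor expansion of $g_k$, or equivalently by a fixed-point/limit construction $\theta_k=\lim_N D_{k,N}^{-1}\circ g_{k+N-1}\circ\cdots\circ g_k$ truncated at the appropriate order. If $g_k(z,w)=(A_k z+\alpha_k(z),\,B_k w+\beta_k(z,w))$, then the first component of every composition and truncation depends only on $z$. The only possible resonance is $e^{-\lambda_1}\sim e^{-j\lambda_2}$, which affects the second coordinate, with resonant monomial $z^j$; this is compatible with fiberedness. The first component of $\theta_k$ solves a one-variable (base) problem and stays a function of $z$. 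Thus $\theta_k=(\psi^0_{1,k}(z),\psi^0_{2,k}(z,w))$, and $\varphi_{\hat x_n,k}=(\psi_{1,k}(z),\psi_{2,k}(z,w))$. This also explains why $D_{\hat x,n}$ can be chosen diagonal, up to a resonant term absorbed as in the reference.

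\textbf{Step 3 (estimates).} Estimate \eqref{distorbase} follows by restricting \eqref{distorglobale} to a horizontal pair of points $u=(z,w)$, $v=(z',w)$. Indeed, $|\psi_{1,k}(z)-\psi_{1,k}(z')|\leqslant|\varphi(u)-\varphi(v)|\leqslant e^{k(\delta+3\varepsilon)}h\,|z-z'|$. For the lower bound, I would note that the first component is a one-dimensional chart of the base dynamics. Its derivative is $e^{k\delta}$ times a tempered quantity, so Koebe distortion on the disc gives the lower bound; this may require shrinking $r$ slowly. Estimate \eqref{distorfibre} is the vertical restriction: the upper bound is again immediate from \eqref{distorglobale}, with $k_{\hat x}=h_{\hat x}$. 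The lower bound holds because $\varphi(z,u)-\varphi(z,u')=(0,\psi_{2,k}(z,u)-\psi_{2,k}(z,u'))$, so \eqref{distorglobale} applies directly.

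\textbf{Main obstacle.} I expect Step 2 to be the difficult part: checking that the precise construction in \cite{BertelootDupont2019}, including the tempered cut-offs and the choice of representatives in the Oseledets decomposition (which is only measurable), truly keeps the first coordinate independent of $w$. A second issue is getting the lower bound in \eqref{distorbase} with the same exponent, where I would use Koebe and slowness of $h'_{\hat x}$.
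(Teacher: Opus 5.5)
Your plan has the same structure as the paper's proof. The Oseledets--Pesin change $\mathcal{C}_{\varepsilon,\hat x}$ is lower triangular because the fast Oseledets line is $\{0\}\times\C$ (Proposition \ref{Cepsi}), so $W_{\hat x}$ is fibered (Lemma \ref{Wfibre}). The polynomial changes $T^1_n$ can be chosen fibered: since $A_n^\delta$ is diagonal, the homological operator preserves fibered homogeneous maps and acts monomial by monomial. Finally, $T^2_n$ is a limit of fibered maps, hence fibered.

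One small correction: the shift $\Delta_n=\e^{n\delta}\mathrm{Id}$ makes the shifted exponents non-resonant. So no resonant term $z^j$ survives, $D_{\hat x,n}$ is genuinely diagonal, and nothing needs to be absorbed.

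Restricting \eqref{distorglobale} to horizontal and vertical pairs is correct. It gives the upper bounds in \eqref{distorbase} and \eqref{distorfibre} with $h'=k=h$, and the lower bound in \eqref{distorfibre}.

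The step that does not work as proposed is the lower bound in \eqref{distorbase}. That inequality carries no multiplicative constant. $h'_{\hat x_n}$ appears only on the upper side, so its slowness cannot help. A fixed constant $C<1$ can be absorbed into $\e^{-2k\varepsilon}$ only for large $k$. The Koebe argument has four problems:
\begin{itemize}
\item Koebe only gives $|\psi_{1,k}(z)-\psi_{1,k}(z')|\geqslant C\,|\psi_{1,k}'(z_{n-k})|\,|z-z'|$ with $C<1$, and only on a disc strictly smaller than one where $\psi_{1,k}$ is known to be univalent.
\item Univalence of $\psi_{1,k}$ does not follow from injectivity of $\varphi_{\hat x_n,k}$.
\item A central derivative equal to $\e^{k\delta}$ times a tempered quantity gives no lower bound, since that quantity may be of size $h^{-1}$.
\item Shrinking radii at a $k$-dependent rate changes the balls $B_{\hat x_n,k}$ of the statement.
\end{itemize}

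The missing step is to use the one-dimensional structure directly, as the paper does. Every map in the construction is fibered, so their first coordinates form the one-dimensional chain over the base. The one-dimensional version of \eqref{distT1} and \eqref{distT2} says the first coordinates of $T^1_n$ and $T^2_n$ are $\e^{\pm\varepsilon}$-bi-Lipschitz on the whole disc. Together with $\Delta_n=\e^{n\delta}\mathrm{Id}$, this gives \eqref{distbase} for $\phi_{1,n}$.

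Next, by \eqref{defCextented}, the first coordinate of $\mathcal{C}_{\varepsilon,\hat x}$ is multiplication by $\sqrt2\,\theta_1(\hat x)^{1/2}/a(\hat x)$. Its modulus is at least $\sqrt2$, because $\theta_1\geqslant1$ and $|a|\leqslant1$, and at most a tempered function. Composing gives \eqref{distorbase} with no loss of constant.

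Only the upper bound in \eqref{distorbase} is used later, in Lemma \ref{ellm}, and that part of your argument is complete.
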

\begin{rmk}
    The distortion in the fibers \eqref{distorfibre} will not be used: only the distortions in the base \eqref{distorbase} and the distortion \eqref{distorglobale} given by Theorem \ref{formenormale} will be used. Moreover, up to setting $h''_{\hat x} = \max\{h_{\hat x}\, , \, h'_{\hat x}\}$, we can assume $h_{\hat x} = h'_{\hat x}$.
\end{rmk}
From now on, we fix $0 < 2\varepsilon < \delta$ sufficiently small such that Theorem \ref{formenormale} is satisfied and such that $\lambda_1> 5\varepsilon+\delta$. Let $\hat{\Omega}_{L}$ be the set of full $\hat{\mu}$ measure given by Theorem \ref{formenormale}. Let $n_0, h_0, r_0$ be positive constants depending only on $\delta$ such that the set \begin{equation}\label{defomega0}
    \hat{\Omega}_0 = \left\{\hat{x}\in \hat{\Omega}_{L}\ ; \ n_{\hat x} \leqslant n_0, \ h_{\hat{x}}\leqslant h_0,\ r_{\hat{x}}\geqslant 2r_0\right\}
\end{equation} 
satisfies $\hat{\mu}(\hat{\Omega}_0)\geqslant 1-\delta$. According to \mane's theorem \cite{Mane1988}, for $\mu_p$-almost every  $z\not\in\bigcup_ip^{-i}(C(p))$, where $C(p)$ is the set of critical points of $p$, there exists $r_p(z)>0$ such that for every $r\leqslant r_p(z)$ we have $\mu_p(B(z,r)) \geqslant r^{\frac{\log d}{\lambda_p}+\varepsilon}.$ Let $S(\delta) = S >0$. We set
\begin{equation}
    \Omega_{S}^p\coloneqq \left\{z\not\in\bigcup_ip^{-i}(C(p))\,;\, r_p(z) \geqslant S\right\}.\label{equa6}
\end{equation}
We choose $S>0$ such that $$\hat{\Omega}_\delta\coloneqq \left\{\hat{x}\in \hat{\Omega}_0\ ; \ \pi\circ\pi_0(\hat{x})\in\Omega_S^p\right\}$$ has measure $\hat{\mu}(\hat{\Omega}_\delta)\geqslant 1-2\delta$.
\subsection{Positivity and continuity of the conditional measures}
First, let us show a general lemma which does not use the dynamics of the system.
\begin{prelem}\label{lempos}
    For $\mu$-almost every $x=(z,w)\in\C^2$ and for every $r>0$, we have $\mu_z(B^z(w,r)) >0.$
\end{prelem}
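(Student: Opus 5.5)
The idea is to show that the bad set has $\mu$-measure zero, using only the disintegration \eqref{desintegrintro} and the fact that $\mu_z$ is supported on the fiber $\{z\}\times\C$. Set
\begin{equation*}
E\coloneqq\left\{(z,w)\in\C^2 \,;\, \exists r>0,\ \mu_z(B^z(w,r))=0\right\}.
\end{equation*}
It suffices to prove $\mu(E)=0$. First we reduce to countably many radii. If $\mu_z(B^z(w,r))=0$ for some $r>0$, pick a rational $0<r'<r/2$ and a point $w'\in(\mathbb{Q}+i\mathbb{Q})$ with $|w-w'|<r'$. Then $w\in B^z(w',r')\subset B^z(w,r)$, so $\mu_z(B^z(w',r'))=0$. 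Therefore
\begin{equation*}
E\subset\bigcup_{(w',r')}\left\{(z,w)\,;\, w\in B(w',r'),\ \mu_z(B^z(w',r'))=0\right\}\eqqcolon\bigcup_{(w',r')}E_{w',r'},
\end{equation*}
where the union runs over the countable set of pairs $(w',r')\in(\mathbb{Q}+i\mathbb{Q})\times\mathbb{Q}_{>0}$.

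Next, each $E_{w',r'}$ is $\mu$-null. Let $A\coloneqq\{z\,;\,\mu_z(B(w',r'))=0\}$. This set is measurable in $z$: the map $z\mapsto\mu_z(U)$ is measurable for open $U$ by Rokhlin's theorem, and it is even lower semicontinuous by the weak-$\ast$ continuity of $z\mapsto\mu_z$ from Jonsson's formula. We have $E_{w',r'}=\{(z,w)\,;\,z\in A,\ w\in B(w',r')\}$. Since $\mu_z$ lives on $\{z\}\times\C$, \eqref{desintegrintro} gives
\begin{equation*}
\mu(E_{w',r'})=\int_{A}\mu_z\big(\{z\}\times B(w',r')\big)\,\di\mu_p(z)=\int_A 0\,\di\mu_p=0.
\end{equation*}
A countable union of null sets is null, so $\mu(E)=0$. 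This is the statement of Lemma \ref{lempos}: for $\mu$-almost every $x=(z,w)$ and every $r>0$, $\mu_z(B^z(w,r))>0$.

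The only delicate point is measurability: the bad set is defined with the quantifier ``$\exists r>0$'', which ranges over uncountably many radii. The countable reduction above is exactly what removes this issue. One also needs the identification of $B^z(w,r)$ with $\{z\}\times B(w,r)$, together with $\mu_z$ being carried by the fiber over $z$. Once these are in place, the argument is only Fubini applied to the disintegration.
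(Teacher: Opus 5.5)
Your proof is correct, and it takes a more direct route than the paper's.

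The paper argues by contradiction. It posits a measurable radius function $r$, positive on a set $\Omega'$ of positive measure, with $\mu_z(B^z(w,r(x)))=0$. It restricts to $\{r>2\delta\}$ to get a uniform lower bound on the radii, takes a Borel density point $x$ of a subset $D$ of that set, and disintegrates $\mu(\D(x,\rho)\cap D)$. On each fiber $z'$ that meets $D$ at some $x'=(z',w')$, the ball $B^{z'}(w,\rho)$ is swallowed by the null ball $B^{z'}(w',r(x'))$. Hence $\mu(\D(x,\rho))=0$, contradicting $x\in\operatorname{supp}\mu$.

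Your engulfing step $B(w',r')\subset B(w,r)$ plays exactly the role of that inclusion. The difference is how the uncountably many radii are handled: you use a countable family of rational balls, where the paper uses a uniform radius $2\delta$ together with a density point.

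This buys you three things:
\begin{itemize}
\item You need no density theorem. For an arbitrary Radon measure and bi-discs, that theorem rests on a Besicovitch-type covering argument.
\item You make no appeal to the support of $\mu$.
\item You get the measurability of the bad set for free, since $E$ is contained in (in fact equal to) the countable union of the measurable null sets $E_{w',r'}$. The paper instead presupposes a measurable function $r$ on $\Omega'$ without constructing it.
\end{itemize}

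Conceptually, your argument is the fiberwise identity $\mu_z(\C\setminus\operatorname{supp}\mu_z)=0$ integrated against $\mu_p$. As you note, neither proof needs the continuity of $z\mapsto\mu_z$: Rokhlin measurability of $z\mapsto\mu_z(\C\times B(w',r'))$ suffices.

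One implicit point deserves a line when you write it up. Restricting the integral to $A$ is justified because, for $z\notin A$, the set $E_{w',r'}$ does not meet the fiber $\{z\}\times\C$ carrying $\mu_z$.
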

\begin{proof}
    By contradiction, suppose that there exists a measurable function $r$ of $x$, strictly positive on a measurable subset $\Omega'$ of strictly positive $\mu$ measure, such that for every $x\in \Omega'$, we have $\mu_z(B^z(w,r(x))) = 0.$ Let $\delta>0$ be sufficiently small such that
    \begin{equation*}
        \mu\left\{x\in\Omega'\, ;\, r(x)> 2\delta\right\} \geqslant \frac{\mu(\Omega')}{2}>0.
    \end{equation*}By the Borel density lemma, there exists a mesurable set $D\subset \left\{x\in\Omega'\,;\, r(x)>2\delta\right\}$ such that 
    \begin{equation*}
        \mu(D) = \mu\left(\left\{r(x)>2\delta\right\}\right)\quad \text{ and } \quad \forall x\in D,\quad \frac{\mu(\D(x,\rho)\cap D)}{\mu(\D(x,\rho))}\overset{\rho\to 0}{\longrightarrow}1.
    \end{equation*}
    Thus, if $x=(z,w)\in D$, there exists $\rho(x)<\delta$ such that 
    \begin{equation*}
        \mu(\D(x,\rho(x))) \leqslant 2\mu(\D(x,\rho(x))\cap D) = 2\int_{z'\in B(z,\rho(x))}\mu_{z'}(B^{z'}(w,\rho(x))\cap D)\di\mu_p(z').
    \end{equation*}
    Let $z'\in B(z,\rho(x))$. Either $B^{z'}(w,\rho(x))\cap D = \emptyset,$ and then $\mu_{z'}(B^{z'}(w,\rho(x))\cap D) = 0.$ Or, there exists $x'=(z',w')\in B^{z'}(w,\rho(x))\cap D$. But then in particular $x'\in D$ and we have the following sequence of inequalities: $\rho(x) < \delta < 2\delta < r(x')$. Therefore, we have the following series of inclusions $B^{z'}(w,\rho(x))\subset B^{z'}(w',2\delta) \subset B^{z'}(w',r(x'))$ where the first inclusion follows from the triangle inequality. Therefore 
    \begin{equation*}
        \mu_{z'}(B^{z'}(w,\rho(x))\cap D) \leqslant \mu_{z'}(B^{z'}(w',r(x')))=0,
    \end{equation*}
    where the last equality again follows from the fact that $x'\in D.$ Thus, we have $\mu(\D(x,\rho(x)))=0$ which is absurd since $x$ is generic for $\mu$.
\end{proof}
We denote by $\Omega_{>0}$ the set of full $\mu$ measure given by Lemma \ref{lempos}. We denote $\hat{\Omega}_{>0}\coloneqq \pi_0^{-1}(\Omega_{>0})$. For $r>0$ and $\hat x\in \hat{\Omega}_{>0}$ with $\pi_0(\hat x) = x_0 = (z_0,w_0)$, we know by Lemma \ref{lempos} that there exists $c_0(x_0)>0$ such that $\mu_{z_0}(B^{z_0}(w_0,r))\geqslant c_0(x_0)$. There therefore exists $c_0(\delta, r)>0$ sufficiently small such that $$\hat{\Omega}_{c_0} \coloneqq \{\hat x\in \hat{\Omega}_{>0}\, ;\, \mu_{z_0}(B^{z_0}(w_0,r))> c_0(\delta,r)\}$$ has measure $\hat\mu(\hat{\Omega}_{c_0}) \geqslant 1-\delta$. We prove the following lemma, which will allow us to extend local properties true on a fiber $z_0$ to neighboring fibers $z$ for small perturbations.
\begin{prelem}\label{cont0}
    Let $\delta>0$. There exists a real $\gamma_0 = \gamma_0(\delta,r_0)>0$ and a set $\hat{\Omega}_{cont}(\delta,r_0)\subset X$ of measure $\hat{\mu}(\hat{\Omega}_{cont})\geqslant 1-2\delta$ such that for every $\hat{x}\in \hat{\Omega}_{cont}$, denoting $\pi_0(\hat{x}) = x_0=(z_0,w_0)$, we have for every $z\in B(z_0,\gamma_0)$,  \begin{equation}\label{eqcont0} 
        \mu_z(B^z(w_0,r_0)) \geqslant  \frac12\mu_{z_0}(B^{z_0}(w_0,r_0)).
    \end{equation}
\end{prelem}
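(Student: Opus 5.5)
Following the introduction, the plan is to combine Jonsson's weak-$*$ continuity of $z\mapsto\mu_z$ (which comes from the continuity of the Green function $G$ and $\mu_z=\Delta_w G(z,\cdot)$) with a compactness argument. This produces a uniform radius $\gamma_0$ on a large set. One subtlety must be handled first. Weak-$*$ convergence only yields lower semicontinuity of masses of \emph{open} sets. Moreover, the ball $B^z(w_0,r_0)$ moves with $z$ only through the fiber, while the center $w_0$ stays fixed. So for fixed $x_0=(z_0,w_0)$ I would argue as follows. The set $U=B(w_0,r_0)\subset\C$ is open, and $\mu_z\to\mu_{z_0}$ weakly as $z\to z_0$, all viewed as measures on $\C$ via the fiber coordinate. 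By the portmanteau theorem, $\liminf_{z\to z_0}\mu_z(U)\geqslant\mu_{z_0}(U)$.

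Hence, for each $x_0$ with $\mu_{z_0}(U)>0$ (true $\mu$-a.e. by Lemma \ref{lempos}), there is $\gamma(x_0)>0$ such that $\mu_z(B(w_0,r_0))\geqslant\frac12\mu_{z_0}(B(w_0,r_0))$ for all $z\in B(z_0,\gamma(x_0))$. To make this rigorous for the liminf, I would use a continuous compactly supported cutoff $\chi$ with $0\leqslant\chi\leqslant\mathds 1_U$ and $\int\chi\,\di\mu_{z_0}\geqslant\frac34\mu_{z_0}(U)$, which exists by inner regularity. Continuity of $z\mapsto\int\chi\,\di\mu_z$ then gives the estimate. It is cleaner to define
\[
\gamma(x_0)=\sup\Bigl\{\gamma\leqslant 1 \,;\, \forall z\in B(z_0,\gamma),\ \mu_z(B(w_0,r_0))\geqslant \tfrac12\mu_{z_0}(B(w_0,r_0))\Bigr\},
\]
which is positive on $\Omega_{>0}$.

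The step I expect to be the main obstacle is \textbf{measurability} of $x_0\mapsto\gamma(x_0)$, which is needed to choose a threshold. I would handle it by restricting the supremum to $\gamma$ rational and replacing "$\forall z\in B(z_0,\gamma)$" by $z$ in a countable dense set of $B(z_0,\gamma)$ when testing against $\int\chi\,\di\mu_z$. Alternatively, I would work with the lower semicontinuous function $(z,w)\mapsto\mu_z(B(w,r_0))$: this is lower semicontinuous jointly by the same portmanteau argument applied with $\chi$ slightly shrunk. Another option is simply to pass to a measurable minorant of $\gamma$ that is still positive a.e. Any of these gives a measurable positive function $\gamma$ on a set of full $\mu$-measure.

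Finally, since $\gamma>0$ $\mu$-a.e., I would choose $\gamma_0(\delta,r_0)>0$ small enough that $\mu\{x_0\,;\,\gamma(x_0)\geqslant\gamma_0\}\geqslant 1-2\delta$ (even $1-\delta$). Then set $\hat\Omega_{cont}=\pi_0^{-1}\{\gamma\geqslant\gamma_0\}\cap\hat\Omega_{>0}\cap X$. Since $(\pi_0)_*\hat\mu=\mu$ and $\hat\mu(X)=1$, this set has measure at least $1-2\delta$, and \eqref{eqcont0} holds on it by construction.
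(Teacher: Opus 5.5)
Your argument is correct and is essentially the paper's: the Portmanteau theorem, applied to the open disc $B(w_0,r_0)$ using the weak-$*$ continuity of $z\mapsto\mu_z$, gives a positive radius $\gamma(x_0)$, and $\gamma_0$ is then chosen so that $\{\gamma\geqslant\gamma_0\}$ has large measure. Your treatment of the measurability of $\gamma$ (for instance via the joint lower semicontinuity of $(z,w)\mapsto\mu_z(B(w,r_0))$) fills in a point the paper only asserts, and your remark that one can reach measure $1-\delta$ is what allows intersecting with $\hat\Omega_{c_0}$; the paper does this so that later $\hat\Omega_1\subset\hat\Omega_{c_0}$.
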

\begin{proof}
    Fix $\delta>0$ and $\hat x\in \hat{\Omega}_{c_0}$. By definition of $\hat\Omega_{c_0}$, we know that there exists $c_0(\delta, r_0)>0$ such that $\mu_{z_0}(B^{z_0}(w_0, r_0)) >c_0(\delta, r_0)$ by disintegration. Since the ball $B^{z_0}(w_0, r_0)$ is open, the Portmanteau Theorem gives \begin{equation*}
        \liminf_{z\to z_0}\mu_z(B^{z}(w_0, r_0)) \geqslant \mu_{z_0}(B^{z_0}(w_0,r_0)).
    \end{equation*} since the family of measures $\mu_z$ is continuous. Therefore, there exists a positive measurable function $\gamma_r(\hat x)$ such that, for every $z\in B(z_0, \gamma_{r_0}(\hat x))$, we have $\mu_z(B^z(w_0, r_0)) >\frac12 \mu_{z_0}(B^{z_0}(w_0,r_0))$. We fix a constant $0 < \gamma_0 < r_0$ chosen such that 
    \begin{equation*}
        \hat{\Omega}_{cont}(\delta,r_0) \coloneqq\left\{\hat{x}\in\hat{\Omega}_{c_0} \,;\, \gamma_{r_0}(\hat x)\geqslant \gamma_0\right\}
    \end{equation*}
    has measure $\hat{\mu}(\hat{\Omega}_{cont}) \geqslant 1-2\delta.$
\end{proof}
We set $\hat{\Omega}_1\coloneqq \hat{\Omega}_{cont}(\delta,r_0)\cap\hat{\Omega}_\delta$. This set has measure $\hat{\mu}(\hat{\Omega}_1)\geqslant 1-5\delta$ and satisfies in particular $\hat\Omega_1\subset \hat\Omega_{c_0(\delta, r_0)}$.
\subsubsection{Return to the good set}
We recall that $\hat{f} =\tau^{-1}$ denotes the left shift in $X$ and that $\hat{\mu}$ is $\hat f$-invariant. We denote $\hat x_m\coloneqq \hat f^m(\hat x).$
\begin{prelem}\label{exim}
    There exists $\hat{\Omega}\subset \hat{\Omega}_1 $ such that $\hat{\mu}(\hat{\Omega})=\hat{\mu}(\hat{\Omega}_1)$ and for every $\hat{x}\in \hat{\Omega}$, there exists $n_1(\hat{x})>0$ such that for every $n\geqslant n_1(\hat{x})$, there exists $m(n,\hat{x})=m$ such that $n(1-7\delta)\leqslant m\leqslant n$ and $\hat{f}^m(\hat{x})\in\hat{\Omega}_1.$
\end{prelem}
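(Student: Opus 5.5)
The proof will rest on the Birkhoff ergodic theorem applied to the indicator of $\hat\Omega_1$ for the invertible, ergodic (indeed mixing) system $(X,\hat f,\hat\mu)$. Let $\mathds{1}_{\hat\Omega_1}$ denote this indicator and set
\begin{equation*}
S_n(\hat x)\coloneqq \sum_{j=0}^{n-1}\mathds{1}_{\hat\Omega_1}(\hat f^j(\hat x)).
\end{equation*}
By Birkhoff, there is a set $\hat\Omega_B$ of full $\hat\mu$ measure on which $S_n(\hat x)/n \to \hat\mu(\hat\Omega_1)\geqslant 1-5\delta$. We then define $\hat\Omega\coloneqq \hat\Omega_1\cap\hat\Omega_B$. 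Since $\hat\Omega_B$ has full measure, $\hat\mu(\hat\Omega)=\hat\mu(\hat\Omega_1)$, which is the required measure condition.

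For $\hat x\in\hat\Omega$, the convergence gives some $n_1(\hat x)$ such that $S_{n+1}(\hat x)\geqslant (n+1)(1-6\delta)$ for every $n\geqslant n_1(\hat x)$. We argue by contradiction. Suppose that for some such $n$, no $m$ with $n(1-7\delta)\leqslant m\leqslant n$ satisfies $\hat f^m(\hat x)\in\hat\Omega_1$. Then all the visits counted in $S_{n+1}$ occur at times $j<n(1-7\delta)$. Hence
\begin{equation*}
S_{n+1}(\hat x)\leqslant n(1-7\delta)+1.
\end{equation*}
This contradicts $S_{n+1}(\hat x)\geqslant (n+1)(1-6\delta)$ as soon as $n\delta>1$. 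We therefore enlarge $n_1(\hat x)$ so that it also exceeds $1/\delta$. With this choice we obtain the required $m=m(n,\hat x)$. Note that $m$ may equal $0$ only when $n$ is small, and enlarging $n_1$ rules this out. The case $m=0$ would be harmless anyway, since $\hat x\in\hat\Omega_1$.

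The only point needing care is bookkeeping of the constants. The window between $(1-5\delta)$, the visit frequency, and $(1-7\delta)$, the required lower bound on $m$, leaves a margin of $2\delta n$. We spend $\delta n$ of it on the Birkhoff error and $\delta n$ on the $+1$ boundary terms. No further obstacle is expected. The measurability of $n_1$ follows because it is defined as a first time after which countably many measurable inequalities hold.
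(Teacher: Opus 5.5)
Your proof is correct and follows the paper's route: Birkhoff's theorem for $\mathds{1}_{\hat\Omega_1}$ under the ergodic system $(\hat f,\hat\mu)$, intersected with $\hat\Omega_1$, followed by the counting (pigeonhole) step, which you make explicit where the paper does not. A minor remark: your contradiction only needs $n>6$ rather than $n\delta>1$, but the latter implies it for small $\delta$, so the argument stands.
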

\begin{proof}
    According to the Birkhoff theorem, there exists $\hat{\Omega}\subset \hat{\Omega}_1$ such that $\hat{\mu}(\hat{\Omega}) = \hat{\mu}(\hat{\Omega}_1)$ and such that for every $\hat{x}\in\hat{\Omega}$, 
    \begin{equation*}
        \frac1n\sum_{k=0}^{n-1}\mathds{1}_{\hat{\Omega}_1}(\hat f^k(\hat x)) \overset{n\to\infty}{\longrightarrow}\hat{\mu}(\hat{\Omega}_1).
    \end{equation*}
    By the pigeonhole principle, there exists $n_1(\hat x)$ such that for every $n\geqslant n_1(\hat x)$, there exists $m(n,\hat{x})=m$ such that $n(1-7\delta)\leqslant m\leqslant n$ and $\hat{f}^m(\hat{x})\in\hat{\Omega}_1$.
\end{proof}
\subsubsection{Definition of a good bidisc}
\begin{prelem}\label{ellm}
    Let $\hat{x}\in \hat{\Omega}$ with $x_0 = \pi_0(\hat x)$, $n\gg1$ and $m(n, \hat x) = m$ be given by Lemma \ref{exim}. We denote $f^m(x_0)= x_m = (z_m, w_m)$ where $z_m = p^m(z_0)$ and $w_m = Q^m_{z_0}(w_0)$. We have $$f^{-m}_{\hat x_m}(R_m) \subset \D(x_0,\ell_m\e^{-m\lambda_1}),$$ where $R_m \coloneqq B(z_m, r_0\e^{-m(\lambda_1-\lambda_2)})\times B(w_m,r_0)$ and $\ell_m \coloneqq \e^{3m\varepsilon}r_0h_{0}$.
\end{prelem}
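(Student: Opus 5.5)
Since $\hat x_m=\hat f^m(\hat x)\in\hat\Omega_1\subset\hat\Omega_0$, we may apply Theorem \ref{formenormale} and Proposition \ref{fibre} at the point $\hat x_m$, with $k=m$, so that $x_{m-m}=x_0$. On $\hat\Omega_0$ we have $r_{\hat x_m}\geqslant 2r_0$, $h_{\hat x_m}\leqslant h_0$ and $n_{\hat x_m}\leqslant n_0\leqslant m$, the last because $n\gg1$. Hence the commutative diagram reads
\begin{equation*}
f^{-m}_{\hat x_m}=\varphi_{\hat x_m,m}^{-1}\circ D_{\hat x_m,m}\circ\varphi_{\hat x_m,0}\quad\text{on } B(x_m,r_{\hat x_m}).
\end{equation*}
The first step is to check that $R_m$ lies in the domain. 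It does not sit inside the Euclidean ball $B(x_m,2r_0)$ as stated, but it does sit inside $\D(x_m,r_0)\subset B(x_m,\sqrt2 r_0)\subset B(x_m,2r_0)$, since $\lambda_1>\lambda_2$ gives $r_0\e^{-m(\lambda_1-\lambda_2)}\leqslant r_0$.

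The plan is to track each coordinate separately. Take $y=(z,w)\in R_m$ and set $y'=f^{-m}_{\hat x_m}(y)=(z',w')$. We need $|z'-z_0|<\ell_m\e^{-m\lambda_1}$ and $|w'-w_0|<\ell_m\e^{-m\lambda_1}$.

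For the base coordinate, the maps are fibered by Proposition \ref{fibre}, and $D$ is diagonal with entry $a^2$ (rate $\lambda_2$) on the base. So $z'=\psi_{1,m}^{-1}\bigl(a^2_{\hat x_m,m}\,\psi_{1,0}(z)\bigr)$. We then estimate in three pieces:
\begin{itemize}
\item By \eqref{distorbase} with $k=0$, $|\psi_{1,0}(z)|\leqslant h_0|z-z_m|\leqslant h_0r_0\e^{-m(\lambda_1-\lambda_2)}$.
\item Multiplying by $|a^2|\leqslant \e^{-m\lambda_2+m(\delta+\varepsilon)}$ gives a bound $h_0r_0\e^{-m\lambda_1+m(\delta+\varepsilon)}$.
\item Inverting $\psi_{1,m}$ via the lower bound in \eqref{distorbase}, which divides by $\e^{m(\delta-2\varepsilon)}$, gives $|z'-z_0|\leqslant h_0r_0\e^{-m\lambda_1+3m\varepsilon}=\ell_m\e^{-m\lambda_1}$.
\end{itemize}

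For the full point, use the global distortion \eqref{distorglobale}. We have $|\varphi_{\hat x_m,0}(y)|\leqslant h_0|y-x_m|\lesssim h_0r_0$. Applying $D$, whose largest entry in modulus is $|a^1|\leqslant \e^{-m\lambda_1+m(\delta+\varepsilon)}$ since $\lambda_1>\lambda_2$, the image has norm at most $h_0r_0\e^{-m\lambda_1+m(\delta+\varepsilon)}$. Inverting $\varphi_{\hat x_m,m}$ with the lower bound of \eqref{distorglobale} yields $|y'-x_0|\lesssim h_0r_0\e^{-m\lambda_1+3m\varepsilon}$. This controls $|w'-w_0|$, and it also gives the same bound on $z'$, so the base computation above is only a consistency check.

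The main obstacle is justifying that the inverse of $\varphi_{\hat x_m,m}$ may be applied with the Lipschitz lower bound. This requires the point $D(\varphi_{\hat x_m,0}(y))$ to lie in the image $\varphi_{\hat x_m,m}(B_{\hat x_m,m})$. The commutative diagram guarantees this, because $f^{-m}_{\hat x_m}(y)\in B_{\hat x_m,m}$ and the diagram commutes there; the lower bound in \eqref{distorglobale} then applies directly to the pair $y',x_0$.

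A second difficulty is the factor $\sqrt2$ coming from comparing bidiscs with Euclidean balls, together with the norm of $\varphi_{\hat x_m,0}(y)$. These constants can be absorbed into $\e^{3m\varepsilon}$ (using $\e^{m\varepsilon}>2$ for $m$ large), or handled by using the slack between $r_{\hat x_m}\geqslant2r_0$ and $r_0$. Either way we obtain the strict inclusion $f^{-m}_{\hat x_m}(R_m)\subset\D(x_0,\ell_m\e^{-m\lambda_1})$.
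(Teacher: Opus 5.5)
Your argument has a genuine gap at the step you rely on for the fiber coordinate. You assert that the largest entry in modulus of $D_{\hat x_m,m}=\mathrm{diag}(a^2_{\hat x_m,m},a^1_{\hat x_m,m})$ is $|a^1_{\hat x_m,m}|$ ``since $\lambda_1>\lambda_2$''. This is backwards. We have $|a^i_{\hat x_m,m}|\approx\e^{-m\lambda_i}$, so $\lambda_1>\lambda_2$ makes $a^1$ the \emph{smallest} entry. The norm of $D_{\hat x_m,m}$ is therefore of order $\e^{-m\lambda_2}$, coming from the weakly contracted base direction. From $|\varphi_{\hat x_m,0}(y)|\lesssim h_0r_0$ alone you only get $|y'-x_0|\lesssim h_0r_0\e^{-m(\lambda_2-3\varepsilon)}$. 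This is far larger than $\ell_m\e^{-m\lambda_1}$, so your bound on $|w'-w_0|$ is unproved. In fact the step cannot be true. It never uses the thin base radius $r_0\e^{-m(\lambda_1-\lambda_2)}$, so it would equally give $f^{-m}_{\hat x_m}(\D(x_m,r_0))\subset\D(x_0,\ell_m\e^{-m\lambda_1})$. The proof of Theorem \ref{A} would then give pointwise dimension at most $2\log d/\lambda_1$, strictly below Dupont's lower bound $\log d/\lambda_1+\log d/\lambda_2$.

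The missing idea is that your base computation is not a consistency check but the essential input. It has to be fed into the global distortion estimate rather than run separately. By fiberedness, $D_{\hat x_m,m}\varphi_{\hat x_m,0}(y)=\bigl(a^2\psi_{1,0}(z),\,a^1\psi_{2,0}(z,w)\bigr)$. Your second bullet bounds the first coordinate by $h_0r_0\e^{-m(\lambda_1-\delta-\varepsilon)}$; here the thin base exactly compensates the weak rate $\e^{-m\lambda_2}$. The second coordinate satisfies $|a^1\psi_{2,0}(z,w)|\leqslant\e^{-m(\lambda_1-\delta-\varepsilon)}h_0|y-x_m|$. So the whole vector lies in a bidisc of radius $\simeq h_0r_0\e^{-m(\lambda_1-\delta-\varepsilon)}$. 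Only then does the lower bound of \eqref{distorglobale} for the pair $(y',x_0)$ give $|y'-x_0|\lesssim h_0r_0\e^{-m(\lambda_1-3\varepsilon)}$, which controls $z'$ and $w'$ simultaneously. This is the paper's proof. Your justification that $y'\in B_{\hat x_m,m}$ via the diagram is fine, and more direct than the paper's boundary argument. You cannot instead use the second coordinate alone, since \eqref{distorfibre} only controls $\psi_{2,m}$ on the fiber over $z_0$, and $z'\neq z_0$ in general. Finally, the $\sqrt2$ losses cannot literally be absorbed into $\e^{3m\varepsilon}$: the exponent count $-(\delta-2\varepsilon)+(\delta+\varepsilon)=3\varepsilon$ leaves no spare $\e^{m\varepsilon}$. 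One should simply enlarge $\ell_m$ by a fixed constant, which is harmless for Theorem \ref{A} and is implicit in the paper as well.
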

\begin{proof}
    By the definition \eqref{defomega0} of $r_0$, we indeed have $R_m \subset B(x_m, r_{\hat x_m}).$ Thus, according to the normal form theorem \ref{formenormale}, the inverse branch $f^{-m}_{\hat x_m}$ and $\varphi_{\hat x_m, 0}$ are well defined on $R_m$. By the fibered properties of $\varphi_{\hat x_m, 0}$ we have 
    \begin{equation*}
        \varphi_{\hat x_m, 0}(R_m) \subset B(0,r_0\e^{-m(\lambda_1-\lambda_2)}h_{\hat x_m})\times B(0, r_0h_{\hat x_m}).
    \end{equation*}
    We can then apply the linear map $D_{\hat x_m, m}$ and we obtain 
    \begin{align*}
        D_{\hat x_m, m}\circ \varphi_{\hat x_m, 0}(R_m) &\subset B(0,a^2_{\hat x_m, m}r_0\e^{-m(\lambda_1-\lambda_2)}h_{\hat x_m})\times B(0, a^1_{\hat x_m, m}r_0h_{\hat x_m})\\ &\subset B(0,\e^{-m(\lambda_2-\delta-\varepsilon)}r_0\e^{-m(\lambda_1-\lambda_2)}h_{\hat x_m})\times B(0, \e^{-m(\lambda_1-\delta-\varepsilon)}r_0h_{\hat x_m})\\&\subset \D(0, \e^{-m(\lambda_1-\delta-\varepsilon)}r_0h_{0}),
    \end{align*}
    where $h_{\hat x_m}\leqslant h_0$ since $\hat x_m\in\hat\Omega_0$. From the distortion estimate \eqref{distorglobale} given by the normal form theorem applied to a point on the boundary $u\in\partial B\left(x_0, r_{\hat x_m}\e^{-m(\delta+2\varepsilon)}\right)$, we have $$\lvert \varphi_{\hat x_m, m}(u)\lvert \geqslant \e^{m(\delta-2\varepsilon)}r_{\hat x_m}\e^{-m(\delta+2\varepsilon)} = r_{\hat x_m}\e^{-4m\varepsilon}.$$ This implies, since $\varphi_{\hat x_m, m}$ sends $x_0$ to 0 and is injective, that the image $\varphi_{\hat x_m, m}\left(B\left(x_{0},r_{\hat{x}_m} \e^{-m(\delta+2\varepsilon)}\right)\right)$ is contained in a ball centered at 0 and of radius comparable to $r_{\hat x_m}\e^{-4m\varepsilon}$. Recall that we have chosen $\varepsilon,\delta$ sufficiently small such that $\lambda_1-\delta-\varepsilon > 4\varepsilon$. Then, for $n$ sufficiently large we have $ \e^{-m(\lambda_1-\delta-\varepsilon)}r_0h_0\ll r_0\e^{-4m\varepsilon}$. This gives the inclusion 
    \begin{equation}
        \D(0, \e^{-m(\lambda_1-\delta-\varepsilon)}r_0h_0) \subset \varphi_{\hat x_m, m}\left(B\left(x_{0},\,r_{\hat{x}_m} \e^{-m(\delta+2\varepsilon)}\right)\right) \subset \D(\rho_{\hat x_m}\e^{m\varepsilon}).
    \end{equation}
    In particular, by injectivity of $\varphi_{\hat x_m, m}$, we can consider its local inverse and we obtain, again thanks to the distortion \eqref{distorglobale}: 
    \begin{align*}
        f^{-m}_{\hat x_m}(R_m) = \varphi_{\hat x_m, m}^{-1}\circ D_{\hat x_m, m}\circ \varphi_{\hat x_m, 0}(R_m) \subset \D(x_0, \e^{-m(\delta-2\varepsilon)}\e^{-m(\lambda_1-\delta-\varepsilon)}r_0h_{0}).
    \end{align*}
    Thus, we have $f^{-m}_{\hat x_m}(R_m) \subset \D(x_0, \e^{-m(\lambda_1-3\varepsilon)}r_0h_{0})$.
\end{proof}
\subsection{Proof of Theorem \ref{A}}
We can now prove Theorem \ref{A}. Let $\hat{x}\in \hat{\Omega}$, $n\gg1$ and $m(\hat x, n)$ be defined by Lemma \ref{exim}. We choose $n$ large enough such that $r_0\e^{-n(1-7\delta)(\lambda_1-\lambda_2)}\leqslant \min\{\gamma_0\,,\,S\}$. By Lemma \ref{ellm}, we have $$\D(x_0, \ell_m\e^{-m\lambda_1})\supset f^{-m}_{\hat x_m}(R_m).$$ Thus, by iterating $m$ times the relation $f^\star\mu = d^2\mu$ and by injectivity of $f^m$ on $f^{-m}_{\hat x_m}(R_m)$, we have 
\begin{equation*}
    \mu(\D(x_0, \ell_m\e^{-m\lambda_1}))\geqslant  d^{-2m}\mu(R_m).
\end{equation*}
But then, by disintegration
\begin{equation*}
    \mu(\D(x_0, \e^{-m\lambda_1}\ell_m)) \geqslant d^{-2m} \int_{z\in B(z_m, r_0\e^{-m(\lambda_1-\lambda_2)})}\mu_z(B^z(w_m, r_0))\di\mu_p(z).
\end{equation*}
Since $\hat x\in \hat{\Omega}$, in particular $\hat f^m(\hat x)\in \hat{\Omega}_1 \subset \hat{\Omega}_{cont}$ and we can apply Lemma \ref{cont0} to it, which gives 
\begin{equation*}
    \mu(\D(x_0, \e^{-m\lambda_1}\ell_m)) \geqslant d^{-2m} \frac12\mu_{z_m}(B^{z_m}(w_m, r_0))\mu_p(B(z_m, r_0\e^{-m(\lambda_1-\lambda_2)})).
\end{equation*} 
Moreover, since $\hat f^m(\hat x)\in \hat{\Omega}_1 \subset \hat{\Omega}_{\delta}$, we have $r_p(z_m)\geqslant S \geqslant r_0\e^{-m(\lambda_1-\lambda_2)}$, where $r_p$ is defined in (\ref{equa6}). Therefore 
\begin{equation*}
    \mu(\D(x_0, \e^{-m\lambda_1}\ell_m)) \geqslant \frac12d^{-2m} \mu_{z_m}(B^{z_m}(w_m, r_0))(r_0\e^{-m(\lambda_1-\lambda_2)})^{\log d /\lambda_p+\varepsilon}.
\end{equation*}
Finally, since $\hat f^m(\hat x)\in\hat\Omega_1 \subset \hat\Omega_{c_0}$ and $\lambda_p= \lambda_2$, setting $c_0 \coloneqq \frac12c_0(\delta, r_0)$
\begin{equation*}
     \mu(\D(x_0, \e^{-m\lambda_1}\ell_m)) \geqslant d^{-2m}c_0(r_0\e^{-m(\lambda_1-\lambda_2)})^{\log d/\lambda_2+\varepsilon}.
\end{equation*}
After simplifications, this gives
\begin{equation*}
    \log\mu(\D(x_0, \e^{-m\lambda_1}\ell_m)) \geqslant -m\left(\varepsilon(\lambda_1-\lambda_2)+ \left(\frac{\lambda_1}{\lambda_2}+1\right)\log d\right)+\log \left(c_0r_0^{\log d/\lambda_2+\varepsilon}\right).
\end{equation*}
Since $n(1-7\delta)\leqslant m(n,\hat x)\leqslant n$ and $\e^{-m\lambda_1}\ell_m = \e^{-m(\lambda_1-3\varepsilon)}r_0h_0$, we have
\begin{equation*}
    \frac{\log\mu(\D(x_0, \e^{-n(1-7\delta)(\lambda_1-3\varepsilon)}r_0h_0))}{\log \e^{-n(\lambda_1-3\varepsilon)}r_0h_0} \leqslant \frac{\log\mu(\D(x_0, \e^{-m\lambda_1}\ell_m))}{\log \e^{-m\lambda_1}\ell_m}.
\end{equation*}
Hence,
\begin{equation*}
    \frac{\log\mu(\D(x_0, \e^{-m\lambda_1}\ell_m))}{\log \e^{-m\lambda_1}\ell_m} \leqslant  \frac{1}{\lambda_1-3\varepsilon-\frac{\log r_0h_0}{m}}\left(\varepsilon(\lambda_1-\lambda_2)+ \left(\frac{\lambda_1}{\lambda_2}+1\right)\log d\right)-\frac{\log \left(c_0r_0^{\log d/\lambda_2+\varepsilon}\right)}{m(\lambda_1-3\varepsilon)-\log r_0h_0}.
\end{equation*}
Set $\rho_n \coloneqq \e^{-n(1-7\delta)(\lambda_1-3\varepsilon)}r_0h_0$. Passing to the upper limit, we then obtain:
\begin{align*}
    \limsup_{n\to\infty}\frac{\log\mu(\D(x_0, \rho_n))}{\log \rho_n} &\leqslant  \limsup_{n\to\infty}\frac{ \log \e^{-n(\lambda_1-3\varepsilon)}r_0h_0}{\log \rho_n}\frac{\log\mu(\D(x_0, \e^{-m\lambda_1}\ell_m))}{\log \e^{-m\lambda_1}\ell_m} \\&\leqslant \frac{1}{(1-7\delta)(\lambda_1-3\varepsilon)}\left(\varepsilon(\lambda_1-\lambda_2)+ \left(\frac{\lambda_1}{\lambda_2}+1\right)\log d\right).
\end{align*}
Since the sequence $(\rho_n)_{n\in\N}$ is decreasing and satisfies $\lim_{n\to\infty}\rho_n = 0$, for every $r>0$ small, there exists $n\geqslant 0$ such that $\rho_{n+1}\leqslant r \leqslant \rho_n$. Therefore, the inequality $\mu(\D(x_0,\rho_{n+1}))\leqslant \mu(\D(x_0,r))$ and the negativity of the logarithms give: 
\begin{equation*}
    \frac{\log\mu(\D(x_0, r))}{\log r}\leqslant \frac{\log\mu(\D(x_0, \rho_{n+1}))}{\log \rho_{n+1}}\frac{\log\rho_{n+1}}{\log r}.
\end{equation*}
Since, 
\begin{equation*}
   1 \leqslant\frac{\log\rho_{n+1}}{\log r} \leqslant \frac{\log\rho_{n+1}}{\log\rho_n} \overset{n\to\infty}{\longrightarrow} 1,
\end{equation*}
we finally obtain that for every $\hat{x}\in\hat{\Omega}$, 
\begin{equation}
    \limsup_{r\to 0}\frac{\log\mu\left(\D(x_0,r)\right)}{\log r} \leqslant \frac{1}{(1-7\delta)(\lambda_1-3\varepsilon)}\left(\varepsilon(\lambda_1-\lambda_2)+ \left(\frac{\lambda_1}{\lambda_2}+1\right)\log d\right).\label{eqpr}
\end{equation}
We denote $\hat \Omega = \hat \Omega(\delta)$ to recall that this set depends only on $\delta$. Set $$\Lambda\coloneqq \bigcap_{j\geqslant1}\bigcup_{k\geqslant j}\hat{\Omega}\left(\frac1k\right).$$ We have $\hat{\mu}(\Lambda) = 1$ since $\hat{\mu}(\hat{\Omega}\left(\frac1k)\right)\geqslant 1-7/k$ for every $k\geqslant 1$. For every $\hat{x}\in \Lambda$, there exists a sequence $(k_i(\hat{x}))_{i\geqslant 1}$ such that $k_i(\hat x) \to+\infty$ and $\hat{x}\in \hat{\Omega}(\frac1{k_i(\hat{x})})$ for every $i\geqslant 1$. Replacing $\delta$ by $1/k_i(\hat{x})$ in (\ref{eqpr}) and choosing for every $\delta$ an $\varepsilon(\delta)<\delta/2$, we obtain, by taking the limit $i\to\infty$
\begin{equation*}
    \limsup_{r\to 0}\frac{\log\mu\left(\D(x_0,r)\right)}{\log r} \leqslant \frac{1}{\lambda_1}\left(\frac{\lambda_1}{\lambda_2}+1\right)\log d = \frac{\log d}{\lambda_1} + \frac{\log d}{\lambda_2}.
\end{equation*}
Combining this inequality with the lower limit given by Dupont \cite{Dupont2011}, we obtain that the Green measure $\mu$ is exact dimensional equal to $\frac{\log d}{\lambda_1} + \frac{\log d}{\lambda_2}$, which finishes the proof of Theorem \ref{A}.
\section{Proof of the theorem of fibered normal forms}\label{Section3}
This section proves Proposition \ref{fibre}. Recall that $\tau$ is the right shift on $X$ and that $\hat f = \tau^{-1}$ is the left one, see Section \ref{Section2} for other notations on the natural extension.
\subsection{Oseledets-Pesin Theorem}\label{section3.1}
For a function $a : X \to ]0,+\infty[$, we set $E(a) \coloneqq \bigcup_{\hat{x}\in X}\{\hat{x}\}\times \D(a(\hat x))$. Since $f$ is holomorphic on $\C^2$, there exist constants $0<M_1 \leqslant M_0$ such that the map
\begin{equation*}
   \begin{array}{ccccc} \mathcal{F} &:  &E(M_1) & \longrightarrow & E(M_0) \\
    &&(\hat{x}, v) & \longmapsto & \left(\hat{f}(\hat{x}),\, f(x_0+v)-f(x_0)\right)
    \end{array}
\end{equation*}
is well defined, where $x_0 = \pi_0(\hat x)$. Note that, by definition of $X$, for every $\hat x\in X$ the map $f$ is locally invertible near $x_0$, and we denote by $f^{-1}_{\hat x}$ its inverse branch sending $x_0$ to $x_{-1} \coloneqq \pi_0(\tau(\hat x))$. In \cite{BertelootDupont2019} and \cite{BertelootDupontMolino2008}, it is shown that for every $\varepsilon > 0$ there exists an $\varepsilon$-slow function $\rho_\varepsilon : X \to (0, 1]$ such that the following map is well defined:
\begin{equation*}
    \begin{array}{ccccc} \mathcal{F}^{-1} &: &E(\rho_\varepsilon) & \longrightarrow & E(M_1) \\
    &&(\hat{x}, v) & \longmapsto & \left(\tau(\hat{x}),\, g_{\hat x}(v)\right)
    \end{array} \quad \text{ where } \quad  g_{\hat x}(v) \coloneqq f^{-1}_{\hat{x}}(x_0+v)-x_{-1}.
\end{equation*}
For $i=1,2$, we set $\hat{L}_i \coloneqq \bigcup_{\hat{x} \in X} \{\hat{x}\} \times L_i$, where $L_1 = \{0\} \times\C$ and $L_2 = \C\times \{0\}$, so that $\cup_{\hat{x} \in X} \{\hat{x}\} \times \C^2 = \hat{L}_1 \oplus \hat{L}_2$. The Oseledets-Pesin theorem can be stated as follows (see \cite[Theorem~S.2.10]{KatokHasselblatt1995}).
\begin{theo}[Oseledets-Pesin, \cite{KatokHasselblatt1995}]\label{2.2}
    Let $d_0\mathcal{F}^{-1}$ be the linear part of $\mathcal{F}^{-1}$, defined by $d_0\mathcal{F}^{-1}(\hat x, v) = (\tau(\hat x), d_0g_{\hat x}(v))$. For every $\varepsilon > 0$, there exist an invertible linear map $\mathcal{C}_\varepsilon$ such that $\mathcal{C}_\varepsilon(\hat x, v) = (\hat x, \mathcal{C}_{\varepsilon,\hat x}(v))$ and an $\varepsilon$-fast function $h_\varepsilon : X \to [1, +\infty[$ such that
    \begin{enumerate}
        \item the linear map $\mathcal{A} \coloneqq \mathcal{C}_\varepsilon \circ d_0\mathcal{F}^{-1} \circ \mathcal{C}_\varepsilon^{-1}$ stabilizes $\hat{L}_1$ and $\hat{L}_2$, and is of the form $\mathcal{A}(\hat x, v) = (\tau(\hat x), A_{\hat{x}}v)$ with $A_{\hat{x}} = \operatorname{diag}(a_2(\hat{x}), a_1(\hat{x}))$, where each $a_i(\hat{x})$ satisfies $\e^{-\lambda_i - \varepsilon} \leqslant |a_i(\hat{x})| \leqslant \e^{-\lambda_i + \varepsilon};$
        \item for $\hat\mu$-almost every $\hat{x} \in X$ and every $v \in \C^2$, $|v| \leqslant|\mathcal{C}_{\varepsilon, \hat{x}}(v)| \leqslant h_\varepsilon(\hat{x})\,|v|$.
    \end{enumerate}
\end{theo}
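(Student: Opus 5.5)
The plan is to run the standard Pesin construction of Lyapunov charts for the linear cocycle $\hat x\mapsto d_0g_{\hat x}$ over $(X,\tau,\hat\mu)$, using Theorem \ref{Oseledets} to align the Oseledets splitting with the coordinate axes as much as possible. The inverse cocycle has exponents $-\lambda_1<-\lambda_2$. By Theorem \ref{Oseledets}, the exponent $-\lambda_1$ has the constant Oseledets line $H_2=\{0\}\times\C=L_1$. The exponent $-\lambda_2$ has the measurable line $H_1(\hat x)=\mathrm{vect}(a(\hat x),b(\hat x))$ with $a(\hat x)\neq0$.

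Both lines are invariant under the cocycle. For $H_2$ this is immediate from the triangular form \eqref{jacobien}, since $d_0g_{\hat x}=(Df_{x_{-1}})^{-1}$ is again lower triangular. For $H_1$ it follows from equivariance of the Oseledets splitting on the natural extension. I would then set $e_1(\hat x)=(a(\hat x),b(\hat x))/|(a,b)|$ and $e_2=(0,1)$, and write $v=v_1e_1(\hat x)+v_2e_2$.

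Next comes the Lyapunov norms. For $i=1,2$ and $v$ in the line attached to $-\lambda_i$, I define
\[
\|v\|_{\hat x,i}^2=\sum_{n\in\Z}|D^n_{\hat x}v|^2\,\e^{2n\lambda_i-2\varepsilon|n|},
\]
where $D^n_{\hat x}$ is the $n$-step cocycle: products of $d g$ for $n\geqslant0$ and of $Df$ along the orbit for $n<0$. The series converges for $\hat\mu$-a.e. $\hat x$ because, by Oseledets (Theorem \ref{Oseledets}), $\frac1{|n|}\log|D^n_{\hat x}v|\to \mp\lambda_i$ on that line. A reindexing of the sum gives
\[
\e^{-\lambda_i-\varepsilon}\|v\|_{\hat x,i}\leqslant\|d_0g_{\hat x}v\|_{\tau\hat x,i}\leqslant \e^{-\lambda_i+\varepsilon}\|v\|_{\hat x,i}.
\]
I then define $\mathcal C_{\varepsilon,\hat x}(v)\coloneqq(\|v_1e_1\|_{\hat x,2}\cdot\theta,\ \|v_2e_2\|_{\hat x,1}\cdot\theta')$, with unimodular phases chosen so that the map is $\C$-linear. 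Concretely, $\mathcal C_{\varepsilon,\hat x}$ is the linear map sending $e_1$ to $(c_2(\hat x),0)$ and $e_2$ to $(0,c_1(\hat x))$, where $c_i=\|e_i\|_{\hat x,i}$. The conjugated cocycle $\mathcal A=\mathcal C_\varepsilon\circ d_0\mathcal F^{-1}\circ\mathcal C_\varepsilon^{-1}$ then maps $L_2=\C\times\{0\}$ and $L_1=\{0\}\times\C$ to themselves. It is therefore diagonal, $A_{\hat x}=\operatorname{diag}(a_2,a_1)$, and the displayed inequality gives $\e^{-\lambda_i-\varepsilon}\leqslant|a_i|\leqslant\e^{-\lambda_i+\varepsilon}$. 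This proves item 1.

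For item 2, the $n=0$ term gives $c_i\geqslant1$, so $|\mathcal C_{\varepsilon,\hat x}v|\geqslant|v_1|+|v_2|$ up to a constant. This dominates $|v|$ after multiplying $\mathcal C$ by a fixed constant, which does not affect item 1. The upper bound $|\mathcal C_{\varepsilon,\hat x}v|\leqslant h(\hat x)|v|$ holds with $h$ controlled by $\max(c_1,c_2)$ and by $1/\sin\angle(H_1(\hat x),H_2)$. The latter is needed because $|v_1|,|v_2|\lesssim |v|/\sin\angle$.

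\textbf{Main obstacle.} The remaining difficulty is that this $h$ is only measurable and finite a.e., whereas the statement asks for an $\varepsilon$-fast function. I would handle this in two steps.
\begin{enumerate}
\item Show that $h$ is \emph{tempered}, i.e. $\frac1n\log h(\tau^{\pm n}\hat x)\to0$. For the angle this is part of the Oseledets theorem (subexponential decay of angles). For $c_i$ it follows from the two-sided estimate above, which gives $|\log c_i(\tau\hat x)-\log c_i(\hat x)|\leqslant C+\log|d_0g_{\hat x}|$-type bounds with integrable right-hand side, and then Birkhoff applies.
\item Apply Pesin's tempering-kernel lemma. Replace $h$ by $h_\varepsilon(\hat x)=\sup_{n\in\Z}h(\tau^n\hat x)\e^{-\varepsilon|n|}$. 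This is finite a.e. by temperedness, is at least $h$, and satisfies $\e^{-\varepsilon}h_\varepsilon(\hat x)\leqslant h_\varepsilon(\tau\hat x)\leqslant \e^{\varepsilon}h_\varepsilon(\hat x)$ by construction.
\end{enumerate}
If needed, I would rescale $\varepsilon$ at the start, since the various steps consume fixed multiples of it. Finally, I would restrict to a $\tau$-invariant full-measure subset of $X$ and extend $h_\varepsilon$ arbitrarily, say by $1$, elsewhere.
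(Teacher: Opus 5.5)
Your proposal is correct and is essentially the construction the paper carries out in Proposition~\ref{Cepsi} (following Katok--Hasselblatt): your $c_2^2$ and $c_1^2$ are the paper's $\theta_1$ and $\theta_2$, and after your constant rescaling your chart is the paper's $\mathcal{C}_{\varepsilon,\hat x}$ (which puts the factor $1/\sqrt2$ in $\mathcal{C}_{\varepsilon,\hat x}^{-1}$). Your upper bound is the paper's $|a(\hat x)|^{-1}(\theta_1^{1/2}+2\theta_2^{1/2})$, and your tempering step (angle from Oseledets, $\theta_i$ via Birkhoff, then a tempering kernel) is also the same. The only caveats are that invoking Theorem~\ref{Oseledets} uses $\lambda_2=\lambda_p$, which the general statement does not assume, though nothing changes if you take $e_2(\hat x)$ to be a unit vector of the measurable $\lambda_1$-Oseledets line, and that there is an index slip: it should read $c_2=\|e_1\|_{\hat x,2}$ and $c_1=\|e_2\|_{\hat x,1}$.
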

\begin{preprop}\label{Cepsi}
    Let $f$ be a skew product such that $\lambda_2 = \lambda_p$. Then, for $\hat \mu$-almost every $\hat x\in X$, the linear map $\mathcal{C}_{\varepsilon,\hat x}$ given by Theorem \ref{2.2} can be chosen fibered.
\end{preprop}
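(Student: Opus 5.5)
The plan is to reconstruct $\mathcal{C}_{\varepsilon,\hat x}$ from the standard Lyapunov-norm recipe, adapted to the invariant splitting of Theorem \ref{Oseledets}, and to check that the resulting matrix is lower triangular in the coordinates $(z,w)$. Here \emph{fibered} means $\mathcal{C}_{\varepsilon,\hat x}(v_1,v_2)=(\alpha(\hat x)v_1,\ \beta(\hat x)v_1+\gamma(\hat x)v_2)$.

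First, I note that $d_0g_{\hat x}=(Df_{x_{-1}})^{-1}$ is lower triangular by \eqref{jacobien}. Its diagonal entries are $p'(z_{-1})^{-1}$ and $\partial_wq(x_{-1})^{-1}$. So $d_0\mathcal{F}^{-1}$ preserves the vertical line $L_1=\{0\}\times\C=H_2$, which is the $\lambda_1$-Oseledets space by Theorem \ref{Oseledets}. The second Oseledets direction is $H_1(\hat x)=\mathrm{vect}(a(\hat x),b(\hat x))$ with $a(\hat x)\neq 0$, associated to $\lambda_2=\lambda_p$.

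The change of coordinates in the proof of Theorem \ref{2.2} is then built from the splitting. One sends $H_1(\hat x)$ to $L_2$ and $H_2$ to $L_1$, and rescales each line by the Lyapunov (Pesin) norm
\[
\|v\|_{\hat x,i}=\Big(\sum_{n\in\Z}|D(f^{n}_{\hat x})v|^2\e^{-2n\lambda_i-2|n|\varepsilon}\Big)^{1/2}.
\]
Concretely, I set $\mathcal{C}_{\varepsilon,\hat x}(v)=(c_2(\hat x)s,\ c_1(\hat x)t)$, where $v=s\,(a,b)+t\,(0,1)$. Then $s=v_1/a$ and $t=v_2-\frac{b}{a}v_1$. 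This gives
\[
\mathcal{C}_{\varepsilon,\hat x}(v_1,v_2)=\Big(\tfrac{c_2}{a}v_1,\ -\tfrac{c_1b}{a}v_1+c_1v_2\Big),
\]
whose first coordinate depends only on $v_1$, as required. The constants are $c_i(\hat x)=\|e_i(\hat x)\|_{\hat x,i}$ for unit generators $e_i$ of the Oseledets lines.

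With this choice, $\mathcal{A}=\mathcal{C}_\varepsilon\circ d_0\mathcal{F}^{-1}\circ\mathcal{C}_\varepsilon^{-1}$ is diagonal, because $d_0\mathcal{F}^{-1}$ maps $H_1(\hat x)$ to $H_1(\tau\hat x)$ and $H_2$ to $H_2$. The first diagonal entry is $a_2(\hat x)=p'(z_{-1})^{-1}\,c_2(\tau\hat x)/c_2(\hat x)$, since the $z$-component of the restriction to $H_1$ is governed by $p'$. The second is $a_1(\hat x)=\partial_wq(x_{-1})^{-1}\,c_1(\tau\hat x)/c_1(\hat x)$.

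The bounds $\e^{-\lambda_i\pm\varepsilon}$ on $|a_i|$ come from the standard Lyapunov-norm computation. Shifting the index in the series shows $\|d_0g\,v\|_{\tau\hat x,i}\in[\e^{-\lambda_i-\varepsilon},\e^{-\lambda_i+\varepsilon}]\,\|v\|_{\hat x,i}$.

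The main obstacle is item 2 of Theorem \ref{2.2}: the bound $|v|\le|\mathcal{C}_{\varepsilon,\hat x}v|\le h_\varepsilon(\hat x)|v|$ with $h_\varepsilon$ being $\varepsilon$-fast. The lower bound holds after normalization, since each Lyapunov norm dominates the $n=0$ term. The coefficient blow-up is controlled by $c_i$, $|b/a|$ and $1/|a|$, that is, by the angle between $H_1(\hat x)$ and the vertical. By Oseledets tempered-angle estimates, $\frac1n\log\angle(H_1(\hat x_{\pm n}),H_2)\to0$ and $\frac1n\log c_i(\hat x_{\pm n})\to 0$. Hence the norm $K(\hat x)$ of the above matrix is tempered. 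I would then apply the standard tempering lemma: $h_\varepsilon(\hat x)=\sup_{n\in\Z}K(\tau^n\hat x)\e^{-|n|\varepsilon}$ is finite almost everywhere, $\varepsilon$-fast, and dominates $K$. The $\varepsilon$ in the Lyapunov norms may have to be replaced by $\varepsilon/2$ so the constants match the statement.

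Finally, uniqueness is not needed. Theorem \ref{2.2} only asserts existence, so exhibiting this fibered $\mathcal{C}_\varepsilon$ with the stated properties suffices.
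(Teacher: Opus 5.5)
Your proposal is correct and follows essentially the same route as the paper. Both build $\mathcal{C}_{\varepsilon,\hat x}$ from the Lyapunov-norm weights of the Oseledets directions $H_1(\hat x)=(a(\hat x),b(\hat x))$ and $H_2=(0,1)$; your $c_2,c_1$ are the paper's $\theta_1^{1/2},\theta_2^{1/2}$. The map is lower triangular because $H_2$ is vertical and $a(\hat x)\neq0$. Its norm is controlled by the tempered quantities $|a(\hat x)|^{-1}$ and $\theta_j^{1/2}$, and is then dominated by an $\varepsilon$-fast function.

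Two small points. First, the \emph{normalization} you need for $|v|\leqslant|\mathcal{C}_{\varepsilon,\hat x}(v)|$ is exactly a factor $\sqrt2$, which is the paper's $1/\sqrt2$ in $\mathcal{C}^{-1}_{\varepsilon,\hat x}$. Second, with $\|(a,b)\|=1$ your explicit formula for $a_2(\hat x)$ misses the factor $a(\hat x)/a(\tau(\hat x))$. This is harmless, since the bounds on $|a_i|$ come from the index-shift identity.
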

\begin{proof}
    We follow the construction of $\mathcal{C}_\varepsilon$ in the proof of \cite[Theorem~S.2.10]{KatokHasselblatt1995}. By Theorem \ref{Oseledets}, for every $\hat x \in X$ there exist $a(\hat x)\neq 0$ and $b(\hat x)$ such that the Oseledets spaces at $\hat x$ are given by the span of
    \begin{equation*}
        H_1(\hat x) = \begin{pmatrix} a(\hat x)\\ b(\hat x) \end{pmatrix} \quad\text{and}\quad H_2(\hat x) = \begin{pmatrix} 0\\ 1 \end{pmatrix},
    \end{equation*}
    where $H_1$ is associated to $\lambda_2$ and $H_2$ to $\lambda_1$. We normalize $\lVert H_1(\hat x)\rVert = 1$ (the equality $\lVert H_2(\hat x)\rVert = 1$ is obvious). Since these lines are invariant, there exists $\xi_i(\hat x)\in\C$ with $|\xi_i(\hat x)|=1$ such that 
    \begin{equation}\label{d0g}
        \frac{d_0 g_{\hat{x}}(H_i(\hat{x}))}{\left\| d_0 g_{\hat{x}}(H_i(\hat{x})) \right\|} = \xi_i(\hat{x})\, H_i(\tau(\hat{x})).
    \end{equation}
    Writing $d_0 g^m_{\hat x}\coloneqq d_0 g_{\tau^{m-1}(\hat x)}\circ\cdots\circ d_0g_{\hat x}$, (and the inverse of such compositions when $m$ is negative), we set
    \begin{equation}\label{deftheta}
        \theta_1(\hat{x}) = \sum_{m \in \Z} \left\| d_0 g^{m}_{\hat{x}}(H_1(\hat{x})) \right\|^2 \e^{2m\lambda_2}\, \e^{-2\varepsilon |m|}\quad \text{ and } \qquad \theta_2(\hat{x}) = \sum_{m \in \Z} \left\| d_0 g^{m}_{\hat{x}}(H_2(\hat{x})) \right\|^2 \e^{2m\lambda_1}\, \e^{-2\varepsilon |m|}.
    \end{equation}
    These series converge (see \cite{KatokHasselblatt1995}) and satisfy $\theta_i \geqslant 1$ (sum of positive terms with the term $m=0$ that is equal to $1$). Re-indexing the sum defining $\theta_1(\tau(\hat x))$ and using $\e^{-2\varepsilon}\leqslant \e^{-2\varepsilon(|k-1|-|k|)}\leqslant\e^{2\varepsilon}$, we get
    \begin{equation}\label{theta}
        \theta_1(\tau(\hat{x}))^{1/2}= \theta_1(\hat{x})^{1/2} \left\| d_0 g_{\hat{x}}(H_1(\hat{x})) \right\|^{-1}\theta\e^{-\lambda_2},\qquad \e^{-\varepsilon} \leqslant \theta \leqslant \e^{\varepsilon}.
        \end{equation}
    and similarly for $\theta_2$. Define
    \begin{equation}\label{defC}
        \begin{array}{ccccl}
            \mathcal{C}_{\varepsilon,\hat{x}}^{-1} & :&\C^2&\longrightarrow &\C^2 \\
             &&(z, w) &\longmapsto &\frac{1}{\sqrt{2}}\left(z\, \theta_1(\hat{x})^{-1/2} H_1(\hat{x}) + w\, \theta_2(\hat{x})^{-1/2} H_2(\hat{x})\right).
        \end{array}
    \end{equation}
    Expanding, we obtain the following fibered map
    \begin{equation}\label{defCextented}
        \mathcal{C}_{\varepsilon,\hat{x}}^{-1}(z, w) = \frac{1}{\sqrt{2}}\left( z\, \theta_1(\hat{x})^{-1/2} a(\hat{x}),\; z\, \theta_1(\hat{x})^{-1/2} b(\hat{x}) + w\, \theta_2(\hat{x})^{-1/2} \right).
    \end{equation}
    Since $a(\hat x)\neq0$, the map $\mathcal{C}_{\varepsilon,\hat{x}}$ is well defined as the inverse of $\mathcal{C}_{\varepsilon,\hat{x}}^{-1}$ and is also fibered. Finally, combining \eqref{d0g} and \eqref{theta}, we get
    \begin{align*}
        d_0 g_{\hat{x}}\big(\mathcal{C}_{\varepsilon,\hat{x}}^{-1}(1,0)\big) &= \frac{1}{\sqrt 2}\theta_1^{-1/2}(\hat x)d_0g_{\hat x}(H_1(\hat x)) \\&= \frac{1}{\sqrt 2}\xi_1(\hat x)\|d_0g_{\hat x}(H_1(\hat x))\|\theta^{-1/2}_1(\hat x)H_1(\tau(\hat x))\\&= \xi_1(\hat x)\theta\e^{-\lambda_2}\mathcal{C}_{\varepsilon,\tau(\hat{x})}^{-1}(1,0),
    \end{align*}
    and similarly for $(0,1)$. Thus
    \begin{equation*}
        \mathcal{C}_{\varepsilon,\tau(\hat{x})} \circ d_0 g_{\hat{x}} \circ \mathcal{C}_{\varepsilon,\hat{x}}^{-1} = \operatorname{diag}\big(a_2(\hat{x}), a_1(\hat{x})\big),
    \end{equation*}
    where $\e^{-\lambda_i - \varepsilon} \leqslant |a_i(\hat{x})| \leqslant \e^{-\lambda_i + \varepsilon}$. Moreover, we verify easily that $\lvert\mathcal{C}^{-1}_{\varepsilon,\hat x}(v)\lvert \leqslant \lvert v\vert$ form \eqref{defC}. Resolving a system from \eqref{defCextented} and using that $\lvert a(\hat x)\lvert ,\, \lvert b(\hat x)\lvert \leqslant 1$, we get 
    \begin{equation*}
        \lvert \mathcal{C}_{\varepsilon, \hat x}(v)\lvert \leqslant \lvert v\lvert\, \frac{1}{\lvert a(\hat x)\lvert }\left(\theta_1^{1/2}+2\theta_2^{1/2}\right).
    \end{equation*}
    The functions $\theta_1$, $\theta_2$ are tempered by their definition \eqref{deftheta} and using the Birkhoff theorem. The function $|a|^{-1}$ is also tempered since the angle between the Oseledets spaces is always tempered (see \cite{KatokHasselblatt1995,BarreiraPesinSchmeling1999}). Hence, $h:\hat x\mapsto \frac{1}{\lvert a(\hat x)\lvert }\left(\theta_1^{1/2}+2\theta_2^{1/2}\right)$ is tempered. Thus, there exist an $\varepsilon$-fast function $h_\varepsilon$ such that $h \leqslant h_\varepsilon$ for $\hat\mu$-almost every point $\hat x\in X$ (see \cite{BertelootDupont2019}) and this gives the upper bound of $\mathcal{C}_{\varepsilon, \hat x}$.
\end{proof}
Since $\mathcal{F}^{-1} : E(\rho_\varepsilon) \to E(M_1)$, by the point 2 of Theorem \ref{2.2}, the following map is well defined:
\begin{equation}\label{defcalW}
    \mathcal{W} \coloneqq \mathcal{C}_\varepsilon \circ \mathcal{F}^{-1} \circ \mathcal{C}_\varepsilon^{-1}\, : \,  E(\rho_\varepsilon) \to E(M_1 h_\varepsilon).
\end{equation}
The expression of $\mathcal{W}$ is of the form $\mathcal{W}(\hat{x}, v) \coloneqq (\tau(\hat{x}), W_{\hat{x}}(v))$, where 
\begin{equation}\label{defW}
    W_{\hat{x}} = \mathcal{C}_{\varepsilon, \tau(\hat{x})} \circ g_{\hat{x}} \circ \mathcal{C}_{\varepsilon, \hat{x}}^{-1}.
\end{equation}
Therefore, there exists an $\varepsilon$-slow function, still denoted by $\rho_\varepsilon$, such that $\mathcal{W} : E(\rho_\varepsilon) \to E(\rho_\varepsilon)$ and $\mathrm{Lip} \, \mathcal{W} \leqslant \e^{-\lambda_2+2\varepsilon}$ \cite[Section~2.2]{BertelootDupont2019}. Since the composition and inverse of fibered maps are still fibered, we have the following lemma.
\begin{prelem}\label{Wfibre}
    Let $f$ be a skew product such that $\lambda_2 = \lambda_p$. Then, for every $\hat x\in X$, the application $W_{\hat x}$ defined by \eqref{defW} is fibered.
\end{prelem}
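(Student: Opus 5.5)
The plan is to unwind definition \eqref{defW} and show that each of the three factors $\mathcal{C}^{-1}_{\varepsilon,\hat x}$, $g_{\hat x}$ and $\mathcal{C}_{\varepsilon,\tau(\hat x)}$ is fibered. Here a map $\C^2\to\C^2$ is called fibered if it has the form $(z,w)\mapsto(\alpha(z),\beta(z,w))$, i.e. its first coordinate depends only on $z$. Such maps are closed under composition: if $F=(\alpha(z),\beta(z,w))$ and $G=(\gamma(z),\eta(z,w))$, then $G\circ F=(\gamma(\alpha(z)),\eta(\alpha(z),\beta(z,w)))$, which is again fibered. So once each factor is fibered, $W_{\hat x}$ is fibered on its domain.

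For the two linear factors I invoke Proposition \ref{Cepsi}. Formula \eqref{defCextented} shows that $\mathcal{C}^{-1}_{\varepsilon,\hat x}$ is lower triangular, with first coordinate $\frac{1}{\sqrt2}\theta_1(\hat x)^{-1/2}a(\hat x)z$. Since $a(\hat x)\neq0$, its inverse $\mathcal{C}_{\varepsilon,\hat x}$ is also lower triangular. Explicitly, its first coordinate is $\sqrt2\,\theta_1(\hat x)^{1/2}a(\hat x)^{-1}z$, and its second coordinate is obtained by solving the triangular system. This holds at both $\hat x$ and $\tau(\hat x)$, which lie in the full-measure set on which Proposition \ref{Cepsi} applies; that set can be taken $\tau$-invariant.

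The main step is the nonlinear factor $g_{\hat x}(v)=f^{-1}_{\hat x}(x_0+v)-x_{-1}$. Write $x_0=(z_0,w_0)$ and $x_{-1}=(z_{-1},w_{-1})$. Since $f(z,w)=(p(z),q(z,w))$ and $x_{-1}\notin\mathrm{Crit}(f)$, the Jacobian \eqref{jacobien} gives $p'(z_{-1})\neq0$ and $\partial_wq(x_{-1})\neq0$. I then construct the local inverse explicitly. Let $p^{-1}_{z_{-1}}$ be the local inverse branch of $p$ sending $z_0$ to $z_{-1}$. For $z$ near $z_0$, let $w\mapsto q_{p^{-1}_{z_{-1}}(z)}^{-1}(w)$ be the local inverse of $q(p^{-1}_{z_{-1}}(z),\cdot)$ near $w_0$; it is given by the implicit function theorem and depends holomorphically on $(z,w)$. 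The map $(z,w)\mapsto\bigl(p^{-1}_{z_{-1}}(z),\,q^{-1}_{p^{-1}_{z_{-1}}(z)}(w)\bigr)$ is a local inverse of $f$ sending $x_0$ to $x_{-1}$. By uniqueness of the local inverse branch it equals $f^{-1}_{\hat x}$, so $f^{-1}_{\hat x}$ is fibered. Translations preserve fiberedness, hence $g_{\hat x}$ is fibered on its domain $\D(\rho_\varepsilon(\hat x))$.

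The obstacle I expect is this domain issue. The implicit-function description is a priori only valid near $x_0$, while $g_{\hat x}$ is defined on the whole bidisc $\D(\rho_\varepsilon(\hat x))$. I would handle it by the identity principle: the first coordinate of $f^{-1}_{\hat x}(z,w)$, as a holomorphic function on the connected bidisc, has vanishing $\partial_w$-derivative near $x_0$, hence everywhere. Alternatively, since $p$ applied to the first coordinate of $f^{-1}_{\hat x}(z,w)$ returns $z$, each slice $\{z\}\times B$ is connected and mapped into a discrete fiber of $p$, so the first coordinate is constant in $w$. Composing the three fibered factors then gives the lemma.
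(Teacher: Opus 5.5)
Your proposal is correct and follows the paper's route: $W_{\hat x}=\mathcal{C}_{\varepsilon,\tau(\hat x)}\circ g_{\hat x}\circ\mathcal{C}_{\varepsilon,\hat x}^{-1}$ is a composition of fibered maps, with the linear factors fibered by Proposition \ref{Cepsi} and $g_{\hat x}$ fibered as a translated inverse branch of the skew product $f$, and fiberedness is preserved under composition and inversion. The paper states this in one line. Your argument, that the first coordinate $\alpha$ of $f^{-1}_{\hat x}$ satisfies $p(\alpha(z,w))=z$ and is therefore constant on the connected vertical slices of the bidisc, is a clean justification of the inversion step the paper leaves implicit.
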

\subsection{Resonances and constraints}
As previously, the Lyapunov exponents of $(f,\mu)$ are denoted by $\lambda_2 < \lambda_1$. For $j=1,2$, we define the set of \emph{$j$-resonant indices} by:
\begin{equation*}
    \mathfrak{R}_j \coloneqq \{ \alpha \in \mathbb{N}^2 : |\alpha| \geqslant2 \text{ and } \alpha_1\lambda_1+ \alpha_2\lambda_2 = \lambda_j \}.
\end{equation*}
Since $\lambda_2 < \lambda_1$, we immediately see that $\mathfrak{R}_2 = \emptyset$ and $\mathfrak{R}_1 = \{(0, k)\}$ if $\lambda_1=\lambda_2k$ with $k\geqslant 2$ or $\mathfrak{R}_1 = \emptyset$ otherwise. We fix a constant $a>0$ such that
\begin{equation}
    \alpha_1\lambda_1 + \alpha_2\lambda_2 - \lambda_j \notin [-a, a]
\end{equation}
for $j = 1,2$ and for every $\alpha \in \mathbb{N}^2 \setminus \mathfrak{R}_1$ such that $2\leqslant \lvert\alpha\lvert\,\leqslant \left[2\lambda_1/\lambda_2\right]$, where $[\cdot]$ denotes the integer part. For each $\delta > 0$, we define the \emph{shifted Lyapunov exponents} by setting: $\lambda_j^\delta \coloneqq \lambda_j - \delta.$ We then define the \emph{$(j,\delta)$-resonant sets} by:
\begin{equation*}
    \mathfrak{R}_j^\delta \coloneqq \{ \alpha \in \mathbb{N}^2 : |\alpha| \geqslant 2 \text{ and } \alpha_1\lambda_1^\delta + \alpha_2\lambda_2^\delta = \lambda_j^\delta \}.
\end{equation*}
Since $\lambda_2 < \lambda_1$, we immediately see that $\mathfrak{R}^\delta_2 = \emptyset$. One can see that $ \mathfrak{R}_1^\delta = \emptyset$ if $\delta$ is sufficiently small. There is therefore no resonance relation for these shifted Lyapunov exponents. More precisely, setting $b \coloneqq \frac{1}{2} \min\{\delta, a\}$,
we have 
\begin{equation}
    \alpha_1\lambda_1^\delta +\alpha_2\lambda_2^\delta - \lambda_j^\delta \notin [-b, b] \label{notinb}
\end{equation} for $j=1,2$ and $2 \leqslant|\alpha| \leqslant[\lambda_1^\delta/\lambda_2^\delta]$ +1.\subsection{Proof of Proposition \ref{fibre}}
\subsubsection{Construction of the coordinate changes}
Let $\rho_\varepsilon$ be the $\varepsilon$-slow function obtained at the end of Section \ref{section3.1}, provided by Definition \eqref{defcalW} of $\mathcal{W}$. Let $\hat{x} \in X$ and $n \in \mathbb{Z}$, and set
\begin{equation*}
    \rho_n \coloneqq \rho_\varepsilon(\tau^n(\hat{x})), \quad W_n \coloneqq W_{\tau^n(\hat{x})}.
\end{equation*}
The sequence $(\rho_n)_n$ is $\varepsilon$-slow and we have $W_n : \D(\rho_n) \to \D(\rho_{n+1}).$ Let $(r_n)_{n\in \Z}$ be any $\varepsilon$-slow sequence such that $r_n \leqslant \rho_n$. We set $r_{\hat{x}} \coloneqq r_0/h_\varepsilon(\hat{x})$ and $r_n^\delta \coloneqq r_n \e^{-n\delta}$. Along an orbit $\hat{x}\in X$, according to \eqref{defW}, the following diagram commutes for every $n \gg 1$:
\begin{equation}\label{2.4}
    \begin{array}{ccc}
        B(x_0, r_{\hat{x}}) & \xrightarrow{\quad f_{\hat{x}}^{-n} \quad} & B(x_{-n}, r_{\hat{x}} \e^{-n(2\varepsilon+\delta)}) \\
        \left\downarrow \scriptstyle \mathcal{C}_{\varepsilon,\hat{x}}  \right. & & \left\downarrow \scriptstyle \mathcal{C}_{\varepsilon,\tau^n(\hat{x})}\right. \\
        \D(r_0) & \xrightarrow{\quad W_{n-1} \circ \dots \circ W_0 \quad} & \D(r_n^\delta).
    \end{array}
\end{equation}
The justification of the domains of definition of the maps in this diagram is given in \cite{BertelootDupontMolino2008}.
\begin{rmk}
    According to Proposition \ref{Cepsi} and Lemma \ref{Wfibre}, we observe that all the maps appearing in diagram (\ref{2.4}) are fibered.
\end{rmk}
We want to prove Proposition \ref{fibre}. First observe that Theorem \ref{formenormale} (non-fibered normal form) is obtained by combining diagram (\ref{2.4}) with Theorem \ref{B} below, by setting $\rho_{\hat{x}} = 4r_0$, $D_{\hat{x}, n} = A_{n-1}^\delta \dots \circ A_0^\delta$  and
\begin{equation*}
    \varphi_{\hat{x}, n} = \varphi_n \circ \mathcal{C}_{\varepsilon,\tau^n(\hat{x})}.
\end{equation*}
\begin{theo}[Linearization of Chains, \cite{BertelootDupontMolino2008, BertelootDupont2019}]\label{B}
    Let $\lambda_2 < \lambda_1$ be strictly positive real numbers. Let $\delta, \varepsilon > 0$ be sufficiently small. Let $(\rho_n)_{n\in\mathbb{Z}}$ be an $\varepsilon$-slow sequence and let
    \begin{equation*}
        W_n : \D(\rho_n) \to \D(\rho_{n+1})
    \end{equation*}
    be a sequence of holomorphic maps fixing the origin in $\mathbb{C}^2$ and such that
    \begin{equation*}
        \forall n \in \mathbb{Z}, \quad \mathrm{Lip}\, W_n \leqslant\theta :\coloneqq e^{-\lambda_2+2\varepsilon}.
    \end{equation*}
    Assume that there exists a decomposition $\mathbb{C}^2 = L_1\oplus L_2$ such that, for every $n \in \mathbb{Z}$, the linear map $A_n \coloneqq d_0 W_n$ is of the form $A_{n} = \emph{diag}(A_2(n), A_1(n))$ where we have $\e^{-\lambda_i - \varepsilon} \leqslant \ |A_i(n)| \leqslant \e^{-\lambda_i + \varepsilon}$, for $i=1,2$. Then there exists an $\varepsilon$-slow sequence $(r_n)_n \leqslant(\rho_n)_n$ and a sequence of holomorphic maps
    \begin{equation*}
        \varphi_n : \D(r_n^\delta) \to \D(4r_n)\quad \text{where } r_n^\delta \coloneqq r_n e^{-n\delta}
    \end{equation*}
    such that we have:
    \begin{enumerate}
        \item the distortion estimate
            \begin{equation}\label{distorB}
                \e^{n(\delta-2\varepsilon)}|u - v| \leqslant |\varphi_n(u) - \varphi_n(v)| \leqslant \e^{n(\delta+2\varepsilon)}|u - v|,
            \end{equation}
        \item the following commuting diagram
            \begin{equation*}
                \begin{tikzcd}
                    \D(r_n^\delta) \arrow[r, "W_n"] \arrow[d, "\varphi_n"'] & \D(r_{n+1}^\delta) \arrow[d, "\varphi_{n+1}"] \\
                    \D(4r_n) \arrow[r, "A_n^\delta"'] & \D(4r_{n+1}).
                \end{tikzcd}
            \end{equation*}
    \end{enumerate}
    where the maps $A_n^\delta$ are given by $A_n^\delta \coloneqq e^\delta A_n$.
\end{theo}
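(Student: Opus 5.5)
We would prove Theorem \ref{B} by the Berteloot--Dupont--Molino scheme: for each $n$, look for $\varphi_n = \mathrm{id} + (\text{higher order})$ (up to the scaling $\e^{n\delta}$ built into the statement) solving the conjugacy equation $\varphi_{n+1}\circ W_n = A_n^\delta\circ\varphi_n$. We build $\varphi_n$ as a limit of compositions of linearized backward orbits. First rescale. Set $\tilde W_n(u) \coloneqq \e^{\delta}W_n(u)$ on suitably rescaled discs, so that the linear part is $A_n^\delta$ and the shifted exponents $\lambda_j^\delta=\lambda_j-\delta$ appear. By \eqref{notinb} no $(j,\delta)$-resonance occurs for $2\le|\alpha|\le[\lambda_1^\delta/\lambda_2^\delta]+1$.

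Step 1 is the polynomial normal form. With $N\coloneqq[\lambda_1^\delta/\lambda_2^\delta]+1$, we solve degree by degree, for $2\le k\le N$, the cohomological equations $P^{(k)}_{n+1}\circ A_n^\delta - A_n^\delta P^{(k)}_n = R^{(k)}_n$ for homogeneous polynomial maps $P^{(k)}_n$. Coefficientwise, in the $\alpha$-monomial of the $j$-th coordinate the equation reads $c_{n+1}\,\prod A_i(n)^{\alpha_i} - A_j(n) c_n = r_n$. Since $\alpha_1\lambda_1^\delta+\alpha_2\lambda_2^\delta-\lambda_j^\delta\notin[-b,b]$, we can sum this recursion forward or backward (depending on the sign) as a geometric series. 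The resulting solutions grow at most like $\e^{c\varepsilon|n|}$ provided $\varepsilon\ll b$, so all coefficients are $\varepsilon$-tempered. Composing with $\mathrm{id}+\sum_k P^{(k)}_n$ turns the chain into one equal to $A_n^\delta$ up to an error of order $N+1$, i.e.\ $O(|u|^{N+1})$ uniformly on discs of $\varepsilon$-slow radius. Cauchy estimates on $\D(\rho_n)$ and $\mathrm{Lip}\,W_n\le\theta$ control the remainder.

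Step 2 removes the remainder. Define $\varphi_n \coloneqq \lim_{k\to\infty}(A^\delta_{n+k-1}\cdots A^\delta_n)^{-1}\circ \tilde W_{n+k-1}\circ\cdots\circ\tilde W_n$ or, more naturally here, as the backward-pulled limit $\lim_k A^\delta_{n-1}\cdots A^\delta_{n-k}\circ(\tilde W_{n-1}\cdots\tilde W_{n-k})^{-1}$. Convergence follows from a telescoping estimate. Each term is bounded by $\|(A^{\delta})^{-k}\|\cdot|\text{remainder}|\lesssim \e^{k(\lambda_1^\delta+c\varepsilon)}\,\e^{-k(N+1)(\lambda_2^\delta-c\varepsilon)}$, and $(N+1)\lambda_2^\delta>\lambda_1^\delta$ because $N\ge\lambda_1^\delta/\lambda_2^\delta$. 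The choice of $N$ is precisely what makes this series geometric. The limit automatically satisfies the commuting diagram.

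Step 3 gives the distortion and the domains. We shrink to an $\varepsilon$-slow $r_n\le\rho_n$ so that on $\D(r_n^\delta)$ the map $\varphi_n$ is $\varepsilon$-close to the linear map $u\mapsto \e^{n\delta}u$ in $C^1$. This yields \eqref{distorB} and $\varphi_n(\D(r_n^\delta))\subset\D(4r_n)$. The main obstacle is the bookkeeping of tempered constants: every estimate (coefficients of $P^{(k)}_n$, Cauchy bounds, the telescoping tail) must lose only $\e^{O(\varepsilon)|n|}$, and one must check that $\varepsilon$-slowness of $\rho_n$ suffices to absorb these into a new $\varepsilon$-slow $r_n$. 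Choosing $\varepsilon\ll b$ and $\varepsilon\ll (N+1)\lambda_2^\delta-\lambda_1^\delta$ first is the key quantitative choice. We would do Step 1 with explicit bounds $|c_n|\le C\e^{2N\varepsilon|n|}/(1-\e^{-b/2})$ and carry these through Step 2.
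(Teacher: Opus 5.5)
Your plan is the paper's construction $\varphi_n=T^2_n\circ T^1_n\circ\Delta_n$, which combines Lemma~\ref{3.1}, Proposition~\ref{3.2} and Proposition~\ref{3.4}. Proposition~\ref{3.2} solves the homological equations by forward or backward geometric sums, thanks to \eqref{notinb}. Proposition~\ref{3.4} removes the remainder by a forward telescoping limit. Your Step~1 and your convergence estimate match these. Three points need correcting, the first two being outright errors as written.

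First, the rescaled chain is not $\tilde W_n=\e^{\delta}W_n$. That chain has the right linear part $A^\delta_n$, but it is not conjugate to $(W_n)$, so linearizing it tells you nothing about $W_n$. The correct object is $W^\delta_n=\Delta_{n+1}\circ W_n\circ\Delta_n^{-1}$ with $\Delta_n=\e^{n\delta}\mathrm{Id}$, i.e.\ $u\mapsto\e^{(n+1)\delta}W_n(\e^{-n\delta}u)$. Its degree-$k$ part is $\e^{\delta}\e^{-n(k-1)\delta}W_n^{(k)}$, not $\e^{\delta}W_n^{(k)}$. Indeed, $\varphi_{n+1}\circ W_n=A^\delta_n\circ\varphi_n$ holds if and only if $\psi_n\coloneqq\varphi_n\circ\Delta_n^{-1}$ satisfies $\psi_{n+1}\circ W^\delta_n=A^\delta_n\circ\psi_n$. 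This $\Delta_n$ is where the factor $\e^{n\delta}$ in \eqref{distorB} and the radii $r^\delta_n$ come from. Consequently your $\varphi_n$ must be the Step~2 limit composed with the Step~1 polynomial change and with $\Delta_n$, not the limit alone.

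Second, discard the ``backward-pulled limit''. The map $(\tilde W_{n-1}\circ\cdots\circ\tilde W_{n-k})^{-1}$ is only defined on the image of $\D(\rho_{n-k})$. That image has diameter at most $2(\theta\e^{\delta})^k\rho_{n-k}\leqslant 2(\theta\e^{\delta+\varepsilon})^k\rho_n\to0$. So these maps have no common domain and the limit is meaningless. Only the forward limit works, and it must be applied to the chain $X_n$ produced by Step~1, not to $\tilde W_n$. Otherwise a single term $(0,z^2)$ already produces increments of size $\e^{k(\lambda_1^\delta-2\lambda_2^\delta)}$, which grow when $\lambda_1^\delta>2\lambda_2^\delta$.

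Third, for the bookkeeping you flag, normalize against $\rho_n$ rather than against $\e^{c\varepsilon|n|}$. Cauchy estimates give degree-$k$ coefficients $\lesssim\rho_n^{1-k}$. Since $\rho_{n\pm r}\geqslant\e^{-r\varepsilon}\rho_n$, the factor $\e^{-rb}$ in your geometric sums absorbs the $\e^{O(k\varepsilon)r}$ losses once $\varepsilon\ll b$. The solutions are then again $\lesssim\rho_n^{1-k}$, and $r_n$ can be taken as a constant multiple of $\rho_n$, hence $\varepsilon$-slow. Your bound $C\e^{2N\varepsilon|n|}$ would only give $2N\varepsilon$-slow radii, not the $\varepsilon$-slow ones the statement requires.
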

In order to prove Proposition \ref{fibre}, it is enough to show that the maps $\varphi_n$ can be chosen fibered, since we know from Proposition \ref{Cepsi} that, for every $\hat{x}\in X$, the map $\mathcal{C}_{\varepsilon,\tau^n(\hat{x})}$ is fibered. This is the purpose of the following proposition.
\begin{preprop}\label{phinfibre}
    In Theorem \ref{B}, if the sequence $(W_n)_{n\in\Z}$ is assumed to be fibered, then the sequence $(\varphi_n)_{n\in\Z}$ satisfying the stated properties can be chosen to be fibered. In particular, if we write
    \begin{equation*}
        \begin{array}{ccccc}
            \varphi_{n} &: &\D(r_n^\delta) &\longrightarrow &\D(4r_n)\\
            &&(z,w)&\longmapsto &(\phi_{1,n}(z), \phi_{2,n}(z,w)),
        \end{array}
    \end{equation*}
    the maps $\phi_{1,n}$ and $\phi_{2,n}$ satisfy, for every $z,z'\in B(0,r_n^\delta)$ and every $(0,u),(0,u')\in \{0\}\times B(0,r_n^\delta)$,
    \begin{equation}\label{distbase}
        \e^{n(\delta-2\varepsilon)}|z - z'| \leqslant |\phi_{1,n}(z) - \phi_{1,n}(z')|\leqslant\e^{n(\delta+2\varepsilon)}|z - z'|,
    \end{equation}
    \begin{equation}\label{distfibre}
        \e^{n(\delta-2\varepsilon)}|u - u'|\leqslant|\phi_{2,n}(0,u) - \phi_{2,n}(0,u')|\leqslant\e^{n(\delta+2\varepsilon)}|u - u'|.
    \end{equation}
\end{preprop}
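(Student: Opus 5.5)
We follow the construction of the $\varphi_n$ in \cite{BertelootDupontMolino2008, BertelootDupont2019} and check that at each step fiberedness is preserved. In that proof, $\varphi_n$ is obtained in two stages. First, a polynomial change of coordinates $P_n$ of degree at most $[\lambda_1^\delta/\lambda_2^\delta]+1$ is constructed so that $P_{n+1}\circ W_n\circ P_n^{-1}$ coincides with the resonant polynomial normal form up to high order. Since $\mathfrak{R}^\delta_j=\emptyset$, this normal form is simply $A_n^\delta$ up to the rescaling. Second, the remainder is killed by the limit
\[
\varphi_n=\lim_{k\to\infty}\bigl(A^\delta_{n+k-1}\circ\cdots\circ A^\delta_n\bigr)^{-1}\circ P_{n+k}\circ W_{n+k-1}\circ\cdots\circ W_n ,
\]
which converges on $\D(r_n^\delta)$ by the contraction estimates, using the gap $\lambda_1^\delta<(N+1)\lambda_2^\delta$. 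The plan is to show that each $P_n$ can be chosen fibered, i.e.\ its first coordinate depends only on $z$. The limit formula then involves only fibered maps: the $W_n$, the diagonal maps $A_n^\delta$ (which are diagonal in $L_2\oplus L_1$, i.e.\ $(z,w)\mapsto(a z, b w)$), and the inverses of fibered maps, which are again fibered. Fibered maps form a closed set under locally uniform convergence, so $\varphi_n$ is fibered.

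For the polynomial step, write $W_n(z,w)=(A_2(n)z+u_n(z),\,A_1(n)w+v_n(z,w))$, where $u_n$ depends only on $z$ because $W_n$ is fibered. The $P_n$ are built degree by degree by solving, for each monomial $z^{\alpha_2}w^{\alpha_1}$ in coordinate $j$, a cohomological equation
\[
c_{n+1}\,A_j(n)^{-1}\ldots=\ldots
\]
along the chain. This equation is solvable with tempered bounds thanks to the nonresonance gap \eqref{notinb}. The key point is the first coordinate ($j=2$): the terms to eliminate at degree $k$ are built from $u_n$ and from lower-degree corrections, which by induction depend only on $z$. The unique tempered solution of the cohomological equation is a series over the chain of these coefficients. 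Hence if every $w$-containing coefficient vanishes in the data, it vanishes in the solution, and we may set the correction to zero for monomials with $\alpha_1\geq1$. The induction on the degree therefore preserves fiberedness: composing the fibered $W_n$ with fibered truncated $P_n$'s produces at the next degree a first coordinate that still depends only on $z$.

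It remains to obtain the estimates \eqref{distbase} and \eqref{distfibre}. Once $\varphi_n=(\phi_{1,n}(z),\phi_{2,n}(z,w))$ is fibered, take $u=(z,0)$ and $v=(z',0)$ in \eqref{distorB}. Then $|u-v|=|z-z'|$, and $|\varphi_n(u)-\varphi_n(v)|$ is at least $|\phi_{1,n}(z)-\phi_{1,n}(z')|$. This gives only the upper side if one is careless, so a better argument is needed. Since $\phi_{1,n}$ comes from the same limit procedure applied to the one-dimensional chain $z\mapsto A_2(n)z+u_n(z)$, Theorem~\ref{B}'s proof in dimension one gives \eqref{distbase} directly. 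For \eqref{distfibre}, restrict to the fiber $z=0$. There $\phi_{2,n}(0,\cdot)$ linearizes the fiber chain $w\mapsto W_n(0,w)_2$, which fixes $0$ because $u_n(0)=0$. Again the one-dimensional version of the distortion argument gives the two-sided bound.

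The main obstacle I expect is checking that the cohomological solution for first-coordinate monomials involving $w$ is exactly zero rather than merely small. This relies on the uniqueness of tempered solutions, and I would verify it by writing the solution explicitly as the convergent series along the chain.
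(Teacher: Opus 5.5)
Your proposal is correct and follows essentially the paper's route. The polynomial step is made fibered through monomial-by-monomial (diagonal) homological equations whose explicit series solutions carry no $w$-terms in the first coordinate; the limit map is then fibered as a locally uniform limit of fibered maps; and \eqref{distbase} comes from the one-dimensional first-coordinate chain. Two simplifications: you do not need uniqueness of tempered solutions, since you only have to \emph{choose} the explicit series solution; and \eqref{distfibre} follows at once from \eqref{distorB} applied to $(0,u),(0,u')$, because both images have the same first coordinate $\phi_{1,n}(0)$, so the separate fiber-chain argument is unnecessary.
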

\subsubsection{Proof of Proposition \ref{phinfibre}}
In order to prove Proposition \ref{phinfibre}, we need to go back to the construction of the maps $\varphi_n$ in Theorem \ref{B}. This construction follows from one lemma and two propositions, which we state below and whose proofs can be found in \cite{BertelootDupontMolino2008, BertelootDupont2019}.
\begin{prelem}[\cite{BertelootDupont2019}, Lemma 3.1]\label{3.1}
    Let $W_n : \D(\rho_n) \to \D(\rho_{n+1})$ be a sequence of holomorphic maps satisfying the assumptions of Theorem \ref{B}. Set $\rho_n^\delta \coloneqq \rho_n \e^{-n\delta}$, $\Delta_n \coloneqq \e^{n\delta}\mathrm{Id}_{\mathbb{C}^2}$, and
    \begin{equation}
         W_n^\delta \coloneqq \Delta_{n+1} \circ W_n \circ \Delta_n^{-1}. \label{wdelta}
    \end{equation}
    Then $\mathrm{Lip}\, W_n^\delta = \theta \e^\delta < 1$ and the linear part $A_n^\delta \coloneqq \e^\delta A_n$ of $W_n^\delta$ satisfies
    \begin{equation*}
        \e^{-\lambda_1 - \varepsilon + \delta} |v| \leqslant|A_n^\delta(v)| \leqslant\e^{-\lambda_2 + \varepsilon + \delta} |v| \quad \text{for all } v \in \mathbb{C}^2.
    \end{equation*}
    Moreover, the following diagram commutes:
    \begin{equation*}
        \begin{array}{c}
            \D(\rho_n^\delta) \xrightarrow{\quad W_n \quad} \D(\rho_{n+1}^\delta)  \\
            \Big\downarrow \Delta_n \qquad\qquad \Big\downarrow \Delta_{n+1} \\
            \D(\rho_n) \xrightarrow{\quad W_n^\delta \quad} \D(\rho_{n+1}).
        \end{array}
    \end{equation*}
\end{prelem}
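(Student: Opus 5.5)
The statement to prove is Lemma~\ref{3.1}: it asserts that $W_n^\delta$ has Lipschitz constant $\theta\e^\delta<1$, that its linear part $A_n^\delta=\e^\delta A_n$ satisfies the two-sided bound, and that the conjugation diagram commutes. The plan is to verify each of these three claims directly from the definition \eqref{wdelta}. Nothing beyond the hypotheses of Theorem~\ref{B} is needed, together with the smallness of $\delta,\varepsilon$ relative to $\lambda_2$.

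\textbf{Domains and commutation.} First we check that $W_n^\delta$ is defined on $\D(\rho_n)$. Since $\Delta_n^{-1}=\e^{-n\delta}\mathrm{Id}$, it maps $\D(\rho_n)$ onto $\D(\rho_n\e^{-n\delta})=\D(\rho_n^\delta)$. We read the diagram's top arrow as $W_n$ restricted to $\D(\rho_n^\delta)$. The Lipschitz bound $\mathrm{Lip}\,W_n\leqslant\theta<1$ and $W_n(0)=0$ give
\[
W_n(\D(\rho_n^\delta))\subset\D(\theta\rho_n\e^{-n\delta}).
\]
Using $\varepsilon$-slowness $\rho_{n+1}\geqslant\e^{-\varepsilon}\rho_n$, we get $\theta\rho_n\e^{-n\delta}\leqslant\theta\e^{\varepsilon+\delta}\rho_{n+1}^\delta\leqslant\rho_{n+1}^\delta$. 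The last inequality holds because $\theta\e^{\varepsilon+\delta}=\e^{-\lambda_2+3\varepsilon+\delta}<1$ when $\delta,\varepsilon\ll\lambda_2$. Hence the top arrow lands in $\D(\rho_{n+1}^\delta)$, and $\Delta_{n+1}$ sends this onto $\D(\rho_{n+1})$. Commutativity, $\Delta_{n+1}\circ W_n=W_n^\delta\circ\Delta_n$, is then exactly the definition \eqref{wdelta}.

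\textbf{Lipschitz constant.} For $u,v\in\D(\rho_n)$ we compute
\[
|W_n^\delta(u)-W_n^\delta(v)|=\e^{(n+1)\delta}\,|W_n(\e^{-n\delta}u)-W_n(\e^{-n\delta}v)|\leqslant\e^{(n+1)\delta}\theta\e^{-n\delta}|u-v|=\theta\e^\delta|u-v|.
\]
Then $\theta\e^\delta=\e^{-\lambda_2+2\varepsilon+\delta}<1$ for small parameters.

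\textbf{Linear part.} Differentiating \eqref{wdelta} at $0$ gives $d_0W_n^\delta=\e^{(n+1)\delta}A_n\e^{-n\delta}=\e^\delta A_n$. Since $A_n=\mathrm{diag}(A_2(n),A_1(n))$ is diagonal in the decomposition $L_1\oplus L_2$, for $v=(v_1,v_2)$ we have
\[
|A_nv|^2=|A_2(n)|^2|v_1|^2+|A_1(n)|^2|v_2|^2 .
\]
This quantity lies between $\min_i|A_i(n)|^2|v|^2$ and $\max_i|A_i(n)|^2|v|^2$. With $\e^{-\lambda_i-\varepsilon}\leqslant|A_i(n)|\leqslant\e^{-\lambda_i+\varepsilon}$ and $\lambda_2<\lambda_1$, the minimum is at least $\e^{-\lambda_1-\varepsilon}$ and the maximum is at most $\e^{-\lambda_2+\varepsilon}$. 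Multiplying by $\e^\delta$ yields the claimed bounds.

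The only point requiring care is the domain bookkeeping in the first step, namely that $\varepsilon$-slowness absorbs the factor $\e^\delta$. This is where the smallness $3\varepsilon+\delta<\lambda_2$ is used. Everything else is a direct computation.
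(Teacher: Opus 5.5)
Your proof is correct. The paper gives no argument of its own for this lemma (it is quoted from Berteloot--Dupont), and the direct verification from \eqref{wdelta} that you carry out is what is needed: commutation by definition, $\mathrm{Lip}\,W_n^\delta\leqslant\theta\e^{\delta}$ by scaling, $d_0W_n^\delta=\e^{\delta}A_n$ with the diagonal bounds, and the domain check via $\varepsilon$-slowness and $\theta\e^{\varepsilon+\delta}<1$.

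One thing should be made explicit. Restricting $W_n$ to $\D(\rho_n^\delta)$, and evaluating $W_n$ at $\e^{-n\delta}u$ for $u\in\D(\rho_n)$ in your Lipschitz estimate, both use $\rho_n^\delta=\rho_n\e^{-n\delta}\leqslant\rho_n$, i.e.\ $n\geqslant 0$. For $n<0$ the bidisc $\D(\rho_n^\delta)$ is larger than the domain $\D(\rho_n)$ of $W_n$. So the lemma as written for all $n\in\Z$ only makes sense for $n\geqslant 0$, which is the range the normal form theorem actually uses, and you should say so.
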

\begin{preprop}[\cite{BertelootDupont2019}, Proposition 3.2]\label{3.2}
    Let $W_n^\delta : \D(\rho_n) \to \D(\rho_{n+1})$ be the sequence of holomorphic maps given by Lemma \ref{3.1}. There exists an $\varepsilon$-slow sequence $(r_n)_{n\in\Z} \leqslant (\rho_n)_{n\in\Z}$ and a sequence of holomorphic maps
    \begin{equation*}
        T^1_n : \D(r_n)\longrightarrow\D(2r_n)
    \end{equation*}
    such that:
    \begin{enumerate}
        \item $d_0 T_n^1 = \mathrm{Id}_{\mathbb{C}^2}$;
        \item for every $n\in\Z$ and every $(u,u') \in \D(r_n) \times \D(r_n)$, we have
            \begin{equation}\label{distT1}
                \e^{-\varepsilon}\di(u, u') \leqslant|T_{n}^1(u) - T_{n}^1(u')| \leqslant\e^{\varepsilon}\di(u, u');
            \end{equation}
        \item the following diagram commutes
            \begin{equation*}
                \begin{array}{c}
                    \D(r_n) \xrightarrow{\quad W_n^\delta \quad} \D(r_{n+1}) \\
                    \Big\downarrow T_n^1 \qquad\qquad\quad \Big\downarrow T_{n+1}^1 \\
                    \D(2r_n) \xrightarrow{\quad X_n \quad} \D(2r_{n+1}),
                \end{array}
            \end{equation*}
            where $X_n = A_n^\delta + O([\lambda_1^\delta/\lambda_2^\delta] + 2)$ for every $n \in \mathbb{Z}$.
    \end{enumerate}
\end{preprop}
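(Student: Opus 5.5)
The statement is Proposition \ref{3.2}, quoted from \cite{BertelootDupont2019}. I would prove it by the standard Poincaré--Dulac scheme for sequences: kill the non-resonant terms of $W_n^\delta$ degree by degree. By \eqref{notinb} there are no resonances for the shifted exponents. The target is therefore a conjugacy to $X_n = A_n^\delta + O(N+1)$ with $N \coloneqq [\lambda_1^\delta/\lambda_2^\delta]+1$.

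\textbf{Degree-by-degree elimination.} Write $W_n^\delta = A_n^\delta + \sum_{k\geqslant 2} P_{n,k}$, with $P_{n,k}$ homogeneous polynomials of degree $k$. I would look for $T^1_n = \mathrm{Id} + \sum_{k=2}^{N} Q_{n,k}$ by induction on $k$. Suppose degrees $2,\dots,k-1$ are already removed. The degree $k$ part of the conjugated map must then vanish, which gives the cohomological equation
\begin{equation*}
Q_{n+1,k}\circ A_n^\delta - A_n^\delta Q_{n,k} = -\tilde P_{n,k}.
\end{equation*}
Here $\tilde P_{n,k}$ is the degree $k$ part after the previous changes of coordinates.

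Since $A_n^\delta$ is diagonal, this equation decouples on monomials $z^{\alpha_1}w^{\alpha_2}e_j$. For each coefficient $q_n$ it becomes a scalar recursion
\begin{equation*}
q_{n+1}\,\mu_n = \nu_n q_n + c_n,
\end{equation*}
where $|\mu_n| \approx e^{-\langle\alpha,\lambda^\delta\rangle}$ and $|\nu_n| \approx e^{-\lambda_j^\delta}$, up to factors $e^{\pm|\alpha|\varepsilon}$. By \eqref{notinb} the gap $\langle\alpha,\lambda^\delta\rangle-\lambda_j^\delta$ lies outside $[-b,b]$. I would therefore solve the recursion by a convergent series $\sum_{m\ge0}$, running either forward or backward in $n$ according to the sign of the gap. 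Each term is geometrically damped with ratio about $e^{-b+C\varepsilon}$, so it suffices to take $\varepsilon \ll b$.

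\textbf{Bounds and domains.} Cauchy estimates on $\D(\rho_n)$ give $|c_n|\lesssim \rho_n^{1-k}$, and this bound is $\varepsilon$-slow. The series then yields $\|Q_{n,k}\| \lesssim \rho_n^{1-k}$, with a constant depending only on $b$ and $N$. I would choose $r_n = c\,\rho_n$, with $c$ small enough that on $\D(r_n)$ each polynomial correction is small in $C^1$. This gives $T^1_n(\D(r_n))\subset \D(2r_n)$, and the $C^1$-closeness to the identity gives the bi-Lipschitz estimate \eqref{distT1} with constant $e^{\pm\varepsilon}$. Since $T^1_n$ is a polynomial perturbation of $\mathrm{Id}$ of order at least $2$, we get $d_0T^1_n = \mathrm{Id}$.

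Finally, define $X_n \coloneqq T^1_{n+1}\circ W_n^\delta\circ (T^1_n)^{-1}$. This makes the diagram commute, and by construction $X_n = A_n^\delta + O(N+1)$.

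\textbf{Main obstacle.} The hard part is the uniform control of the recursion series together with the domains. The constants must stay $\varepsilon$-slow along the whole chain, the composition $W^\delta_n\circ (T^1_n)^{-1}$ must land in the domain of $T^1_{n+1}$, and all of this has to hold at once. I would handle it by taking $\varepsilon$ much smaller than $b/N$ and shrinking $r_n$ by a uniform factor.

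For the later fibered Proposition \ref{phinfibre}, I would also check that this construction preserves fibered maps. If $W_n$ is fibered, the first component of $P_{n,k}$ is $z^k e_1$ only. The unique bounded solution $Q_{n,k}$ is obtained from linear, monomial-wise formulas in the coefficients, so it inherits this form.
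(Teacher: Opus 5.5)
Your proposal is correct and follows essentially the same route as the paper's sketch of \cite{BertelootDupont2019} in Fact~2: you solve the homological equation $G_n^{(k)} + H_{n+1}^{(k)}\circ A_n^\delta - A_n^\delta\circ H_n^{(k)}=0$ degree by degree for $2\leqslant k\leqslant[\lambda_1^\delta/\lambda_2^\delta]+1$, monomial by monomial, by a series running forward or backward in $n$ according to the sign of the gap given by \eqref{notinb}. The only differences are cosmetic: you build $T^1_n$ as a single polynomial $\mathrm{Id}+\sum_k Q_{n,k}$ jet by jet instead of composing the maps $S_n^{(k)}=\mathrm{Id}+H_n^{(k)}$, and you spell out the Cauchy-estimate, $\varepsilon\ll b/N$ and domain bookkeeping that the paper leaves to \cite{BertelootDupont2019}.
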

\begin{preprop}[\cite{BertelootDupont2019}, Proposition 3.4]\label{3.4}
    Let $(X_n)_{n\in\mathbb{Z}}$ be an $\varepsilon$-slow sequence of holomorphic maps with linear parts $(A_n^\delta)_{n\in\mathbb{Z}}$. Assume that there exist $0 < m \leqslant M < 1$ such that, for every $n\in\Z$ and every $v\in\C^2$, $m|v| \leqslant|A_n^\delta(v)| \leqslant M|v|$ and that $X_n = A_n^\delta + O(q + 1)$ where $q \geqslant1$ satisfies $(M \e^{2\varepsilon})^{q+1} < m \e^{-\varepsilon}. $ Then there exists an $\varepsilon$-slow sequence $(r_n)_{n\in\mathbb{Z}}$ and a sequence $(T_n^2)_{n\in\mathbb{Z}}$ of holomorphic maps
    \begin{equation*}
        T^2_n : \D(r_n) \longrightarrow \D(2r_n)
    \end{equation*}
    such that:
    \begin{enumerate}
        \item $d_0 T_n^2 = \mathrm{Id}_{\C^2}$;
        \item for every $n \in \Z$ and every $(u,u') \in \D(r_n) \times \D(r_n)$,
            \begin{equation}\label{distT2}
                \e^{-\varepsilon}\di(u, u') \leqslant|T_n^2(u) - T_n^2(u')| \leqslant\e^{\varepsilon}\di(u, u'),
            \end{equation}
        \item the following diagram commutes:
            \begin{equation*}
                \begin{array}{c}
                    \D(r_n) \xrightarrow{\quad X_n \quad} \D(r_{n+1})  \\
                    \Big\downarrow T_n^2 \qquad\qquad\quad \Big\downarrow T_{n+1}^2 \\
                    \D(2r_n) \xrightarrow{\quad A_n^\delta \quad} \D(2r_{n+1}).
                \end{array}
            \end{equation*}
    \end{enumerate}
\end{preprop}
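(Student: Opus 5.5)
The plan is to build $T^2_n$ directly as a limit of ``renormalised forward compositions''. This is the standard way to kill the high-order terms of a contracting non-autonomous chain. I would not use resonance considerations here: they were already handled in Proposition \ref{3.2}, and the hypothesis $(M\e^{2\varepsilon})^{q+1}<m\e^{-\varepsilon}$ says exactly that the remainder $O(q+1)$ is dominated by the linear parts. Write $X_n^{(k)}\coloneqq X_{n+k-1}\circ\cdots\circ X_n$ and $A_n^{(k)}\coloneqq A^\delta_{n+k-1}\circ\cdots\circ A^\delta_n$, and set
\begin{equation*}
T^2_n\coloneqq\lim_{k\to\infty}\bigl(A_n^{(k)}\bigr)^{-1}\circ X_n^{(k)} .
\end{equation*}
If the limit exists, the identity $\bigl(A_{n+1}^{(k)}\bigr)^{-1}\circ X_{n+1}^{(k)}\circ X_n=A^\delta_n\circ\bigl(A_n^{(k+1)}\bigr)^{-1}\circ X_n^{(k+1)}$ passes to the limit and gives $T^2_{n+1}\circ X_n=A^\delta_n\circ T^2_n$, which is the commuting diagram.

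The first step is to fix the domains. The $\varepsilon$-slow hypothesis on $(X_n)$ and the Cauchy estimates give the following: on a disc $\D(r_n)$ with $r_n$ $\varepsilon$-slow and small, $|X_n(u)-A^\delta_n u|\leqslant C_n|u|^{q+1}$ with $C_n$ $\varepsilon$-fast, say $C_n\leqslant C\e^{|n|\varepsilon}$. Shrinking $r_n$ further by an $\varepsilon$-slow factor, I would ensure $\mathrm{Lip}\,X_n\leqslant M\e^{\varepsilon}$ on $\D(r_n)$. Then $|X_n^{(k)}(u)|\leqslant (M\e^{\varepsilon})^k|u|$, and since $M<1$ while $r_{n+k}\geqslant r_n\e^{-k\varepsilon}$, the orbit stays in the discs where the estimates hold.

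The second step is convergence. Consecutive terms differ by
\begin{equation*}
\bigl(A_n^{(k+1)}\bigr)^{-1}\bigl(X_{n+k}-A^\delta_{n+k}\bigr)\bigl(X_n^{(k)}u\bigr),
\end{equation*}
whose norm is at most $m^{-(k+1)}C_{n+k}(M\e^{\varepsilon})^{k(q+1)}|u|^{q+1}$. Using $C_{n+k}\leqslant C_n\e^{k\varepsilon}$, the ratio is bounded by $\bigl((M\e^{2\varepsilon})^{q+1}m^{-1}\bigr)^k$, which is a geometric rate less than $\e^{-\varepsilon}$ by hypothesis. The series therefore converges uniformly on $\D(r_n)$ to a holomorphic map. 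Each correction is $O(|u|^{q+1})$, so $d_0T^2_n=\mathrm{Id}$.

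The third step is the distortion estimate \eqref{distT2} and the inclusion $T^2_n(\D(r_n))\subset\D(2r_n)$. Apply the same bound on a slightly larger disc and use a Cauchy estimate. This gives $\|D(T^2_n-\mathrm{Id})\|\leqslant K C_n r_n^{q}$ on $\D(r_n)$. Replacing $r_n$ by an $\varepsilon$-slow sequence small enough that $KC_nr_n^q\leqslant 1-\e^{-\varepsilon}$ (possible because $C_n$ is $\varepsilon$-fast, and $q\geqslant 1$ lets one absorb it by an $\varepsilon$-slow choice) yields $\e^{-\varepsilon}\leqslant$ bi-Lipschitz ratio $\leqslant\e^{\varepsilon}$, and in particular $|T^2_n(u)|\leqslant 2|u|$.

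I expect the main obstacle to be the bookkeeping of the $\varepsilon$-slow and $\varepsilon$-fast quantities. The growth of $C_{n+k}$ along the orbit and the shrinking of $r_{n+k}$ must both be absorbed by the gap $(M\e^{2\varepsilon})^{q+1}<m\e^{-\varepsilon}$; the second factor $\e^{\varepsilon}$ in $M\e^{2\varepsilon}$ is precisely the room reserved for this. I would first write every constant as $(\text{const})\cdot\e^{\pm|n|\varepsilon}$ and check the exponents. If the orbit leaves the good discs, I would fall back on shrinking $r_n$ by an extra factor $\e^{-|n|\varepsilon}$ and re-running the estimate.
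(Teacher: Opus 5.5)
Your construction $T^2_n=\lim_{k}\bigl(A_n^{(k)}\bigr)^{-1}\circ X_n^{(k)}$ is exactly the one the paper recalls from \cite{BertelootDupont2019} in Fact 3. Your telescoping geometric series, the conjugacy identity passed to the limit, and the Cauchy-estimate distortion bound give a correct proof along the same route.

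One small bookkeeping correction: the Schwarz-lemma constant for the remainder is of order $\rho_n^{-q}$, so it is $q\varepsilon$-fast rather than $\varepsilon$-fast. The per-step ratio then becomes $m^{-1}M^{q+1}\e^{(2q+1)\varepsilon}\leqslant m^{-1}(M\e^{2\varepsilon})^{q+1}<\e^{-\varepsilon}$, so the hypothesis still absorbs this growth.
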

By applying successively Lemma \ref{3.1} and Propositions \ref{3.2} and \ref{3.4}, the sequence of maps $(\varphi_n)_{n\geqslant 0}$ given by Theorem \ref{B} is defined by
\begin{equation*}
    \varphi_n \coloneqq T_n^2 \circ T_n^1 \circ \Delta_n.
\end{equation*}
We can now prove Proposition \ref{phinfibre}. We say that a sequence $(G_n)_{n\in\mathbb{Z}}$ of holomorphic maps $G_n : \D(a_n) \to \D(a_{n+1})$ is \emph{$\varepsilon$-slow} if $(a_n)_{n\in\mathbb{Z}}$ is an $\varepsilon$-slow sequence (see \eqref{lente}).
\begin{proof}[Proof of Proposition \ref{phinfibre}]
    We study the construction of $\Delta_n$, $T^1_n$ and $T^2_n$.
    \paragraph{Fact 1.} In Lemma \ref{3.1}, if $(W_n)_{n\in\Z}$ is assumed to be fibered, then $(W_n^\delta)_{n\in\Z}$ is also fibered.
    \begin{proof}
        The maps $\Delta_n$ are homotheties. The result follows from the definition (\ref{wdelta}) of $W_n^\delta$.
    \end{proof}
    \paragraph{Fact 2.} In Proposition \ref{3.2}, if $(W_n^\delta)_{n\in\Z}$ is assumed to be fibered, then the sequence $(T^1_n)_{n\in\Z}$ satisfying the stated properties can be chosen to be fibered.
    \begin{proof}
        We follow the proof of Proposition 3.2 in \cite{BertelootDupont2019}. Let $(G_n)_{n\in\mathbb{Z}}$ be a chain such that $d_0G_n = A_n^\delta$. Denote by $G_n^{(k)}$ the homogeneous part of degree $k$ of $G_n$. Suppose that there exists a sequence of homogeneous polynomial maps $\left(H_n^{(k)}\right)_{n\in\Z}$ of degree $k$ satisfying the homological equation
        \begin{equation}\label{eqhomol}
            G_n^{(k)} + H_{n+1}^{(k)}\circ A_n^\delta - A_n^\delta\circ H_n^{(k)} = 0.
        \end{equation}
        Then, if we set
        \begin{equation}\label{defsnk}
            S_n^{(k)} \coloneqq \mathrm{Id} + H_n^{(k)},
        \end{equation}
        the sequence $S_{n+1}^{(k)} \circ G_n \circ \left(S_n^{(k)}\right)^{-1}$ has no homogeneous part of order $k$, see \cite{BertelootDupont2019}. Proposition \ref{3.2} is then obtained by successively applying this procedure a finite number of times, starting with $G_n = W_n^{\delta}$ and $k=2$, then with $G_n = S_{n+1}^{(2)} \circ W_n^{\delta} \circ \left(S_n^{(2)}\right)^{-1}$ and $k=3$, and so on, up to $k = [\lambda_1^\delta/\lambda_2^\delta]+1$. 
        
        We will show that if $\left(G_n\right)_{n\in\Z}$ is fibered, then for every $k\geqslant 2$ there exists a solution $\left(H_n^{(k)}\right)_{n\in\Z}$ to \eqref{eqhomol} which is fibered. The change of coordinates $S_n^{(k)}$ defined in \eqref{defsnk} is then fibered, as is the composition $(T_n^1)_{n\in\Z}$ of these changes of coordinates, which will complete the proof of Fact 2.

        By linearity, it is enough to solve \eqref{eqhomol} when the sequence $\left(G_n^{(k)}\right)_{n\in\Z}$ has only one nonzero coordinate, which is moreover a monomial. Since $(G_n)_{n\in\Z}$ is fibered, and according to the resonance conditions \eqref{notinb}, there are three cases to consider for each $k\geqslant 2$:
        \begin{enumerate}
            \item $G_n^{(k)}(z,w) = (a_n z^k,\, 0)$;
            \item $G_n^{(k)}(z,w) = (0,\, a_n z^{\alpha_2} w^{\alpha_1})$ where $\alpha_1 \lambda_1^{\delta} + \alpha_2 \lambda_2^{\delta} - \lambda_1^{\delta} > b$;
            \item $G_n^{(k)}(z,w) = (0,\, a_n z^{\alpha_2} w^{\alpha_1})$ where $\alpha_1 \lambda_1^{\delta} + \alpha_2 \lambda_2^{\delta} - \lambda_1^{\delta} < -b$.
        \end{enumerate}
        Let $\mathcal H_k$ be the vector space of homogeneous fibered polynomial maps of degree $k$. Since $A_n^\delta$ and $(A_n^\delta)^{-1}$ are diagonal, composition with these maps preserves $\mathcal H_k$. To solve \eqref{eqhomol}, it suffices to show that the following linear operator $\Gamma$, defined on the space of $\varepsilon$-slow sequences in $\mathcal{H}_k$, is surjective:
        \begin{equation*}
            \Gamma : \left(H_n^{(k)}\right)_{n \in \mathbb{Z}} \in \left(\mathcal{H}_k\right)^{\Z} \mapsto \left(H_{n+1}^{(k)} \circ A_n^\delta - A_n^\delta \circ H_n^{(k)}\right)_{n \in \mathbb{Z}}.
        \end{equation*}\\
        \noindent\underline{Case 1}: We set
            \begin{equation*}
                H_n^{(k)} \coloneqq (A_n^\delta)^{-1} G_n^{(k)}+ \sum_{r \geqslant1}(A_n^\delta)^{-1} \dots (A_{n+r}^\delta)^{-1}G_{n+r}^{(k)}A_{n+r-1}^\delta \dots A_n^\delta.
            \end{equation*}
            This series converges because its second coordinate is zero and its first coordinate is equivalent to
            \begin{equation*}
                \sum_{r\geqslant 1} \e^{r\lambda_2^{\delta}}\, a_{n+r} \left(\e^{-r\lambda_2^{\delta}}\right)^{k} \simeq \sum_{r\geqslant 1} a_{n+r}\, \e^{-r\lambda_2^{\delta}(k-1)}
            \end{equation*}
            since the sequence $(a_n)_{n\in\Z}$ is $\varepsilon$-slow.\\[5pt]
        \noindent \underline{Case 2}: We use the same formula as in Case 1. The series converges because its first coordinate is zero and its second coordinate is equivalent to
            \begin{equation*}
                \sum_{r\geqslant 1} \e^{r\lambda_1^{\delta}} a_{n+r}\left(\e^{-r\lambda_2^{\delta}}\right)^{\alpha_2} \left(\e^{-r\lambda_1^{\delta}}\right)^{\alpha_1}\simeq\sum_{r\geqslant 1} a_{n+r}\underset{<\e^{-rb}}{\underbrace{\e^{-r\left(\alpha_1 \lambda_1^{\delta} + \alpha_2 \lambda_2^{\delta} - \lambda_1^{\delta}\right)}}}.
            \end{equation*}\\
        \noindent\underline{Case 3}: Here we set
            \begin{equation*}
                H_n^{(k)} \coloneqq -G_{n-1}^{(k)} (A_{n-1}^\delta)^{-1} - \sum_{r \geqslant1} A_{n-1}^\delta \dots A_{n-r}^\delta G_{n-(r+1)}^{(k)}(A_{n-(r+1)}^\delta)^{-1} \dots(A_{n-1}^\delta)^{-1}.
            \end{equation*}
            The series converges because its first coordinate is zero and its second coordinate is equivalent to
            \begin{equation*}
                \sum_{r\geqslant 1} \e^{-r\lambda_1^{\delta}} a_{n-(r+1)} \left(\e^{r\lambda_2^{\delta}}\right)^{\alpha_2} \left(\e^{r\lambda_1^{\delta}}\right)^{\alpha_1}\simeq\sum_{r\geqslant 1} a_{n-(r+1)}\underset{<\e^{-rb}}{\underbrace{\e^{r\left(\alpha_1 \lambda_1^{\delta} + \alpha_2 \lambda_2^{\delta} - \lambda_1^{\delta}\right)}}}.
            \end{equation*}
        In all three cases, one checks that
        \begin{equation*}
            \left(G_n^{(k)}\right)_{n\in\Z}+\Gamma\left(\left(H_n^{(k)}\right)_{n\in\Z}\right)= 0.
        \end{equation*}
        Moreover, $H_n^{(k)}$ is indeed fibered: in Case 1, its first coordinate is a multiple of $z^k$, while in Cases 2 and 3, its first coordinate is zero. This completes the proof of Fact 2.
    \end{proof}
    \paragraph{Fact 3.} In Proposition \ref{3.4}, if $(X_n)_{n\in\Z}$ is assumed to be fibered, then the sequence $(T^2_n)_{n\in\Z}$ satisfying the stated properties can be chosen to be fibered.
    \begin{proof}
        We again follow \cite[Proposition~3.4]{BertelootDupont2019}. For $n\in\Z$, the map $T^2_n$ is defined there by
        \begin{equation*}
            T_n^2  \coloneqq \lim_{k\to+\infty}(A_{n}^\delta)^{-1}\circ\cdots\circ (A_{n+k}^\delta)^{-1}\circ X_{n+k}\circ\cdots\circ X_n.
        \end{equation*}
        By its definition in Theorem \ref{B}, $A_n^\delta$ is fibered (since it is diagonal), and in our case $X_n$ is assumed to be fibered. Therefore, $T^2_n$ remains fibered, since a locally uniform limit of fibered maps is fibered.
    \end{proof}
    By Fact 1, $W_n^\delta$ is fibered, and hence Fact 2 implies that $T^1_n$ is also fibered. Moreover, by the commutativity of the diagram in Proposition \ref{3.2}, the maps $X_n$ are fibered, and thus, by Fact 3, $T^2_n$ is fibered as well. Therefore,
    \begin{equation*}
        \varphi_n = T^2_n\circ T^1_n\circ\Delta_n
    \end{equation*}
    remains fibered. Let us observe that restricting all the maps we have constructed to their first coordinate forms a one-dimensional chain that verify the one-dimensional version of \cite{BertelootDupont2019} In particular, since $d_0T^1_n = d_0T^2_n =\mathrm{Id}_{\C^2}$ and $\Delta_n$ is a homothety, we have $d_0\varphi_n = \e^{n\delta}\mathrm{Id}_{\C^2}.$ Hence, the writing $\varphi_n(z,w) = (\phi_{1,n}(z), \phi_{2,n}(z,w))$ gives $d_0\phi_{1,n} = \e^{n\delta}\mathrm{Id}_\C$ and using the estimates \eqref{distT1} and \eqref{distT2}, we obtain \eqref{distbase}. Equation \eqref{distfibre} is simply a particular case of the global distortion estimate \eqref{distorB} given by Theorem \ref{B}.
\end{proof}

\printbibliography

@article{BarreiraPesinSchmeling1999,
author    = {Barreira, Luis and Pesin, Yakov and Schmeling, J{\"o}rg},
title     = {Dimension and product structure of hyperbolic measures},
year      = {1999},
journal   = {Annals of Mathematics},
volume    = {149},
series    = {2},
number    = {3},
pages     = {755-783},
doi       = {10.2307/121072}
}

@article{BertelootDupont2019,
author  = {Berteloot, Fran{\c{c}}ois and Dupont, Christophe},
title   = {A distortion theorem for the iterated inverse branches of a holomorphic endomorphism of $\mathbb{P}^k$},
year    = {2019},
journal = {London Mathematical Society},
volume  = {99},
number  = {1},
pages   = {153-172},
doi     = {10.1112/jlms.12163}
}

@article{BertelootDupontMolino2008,
author = {Berteloot, Fran{\c{c}}ois and Dupont, Christophe and Molino, Laura},
title = {Normalization of bundle holomorphic contractions and applications to dynamics},
year = {2008},
journal = {Annales de l'Institut Fourier},
volume = {58},
number = {6},
pages = {2137-2168},
doi = {10.5802/aif.2409}
}

@article{BinderDeMarco2003,
author    = {Binder, Ilia and DeMarco, Laura},
title     = {Dimension of pluriharmonic measure and polynomial endomorphisms of $\mathbb{C}^n$},
year      = {2003},
journal   = {International Mathematics Research Notices},
volume    = {2003},
number    = {11},
pages     = {613-625},
doi       = {10.1155/S107379280320910X}
}

@article{CornfeldFominSinai1982,
author    = {Cornfeld, Isaac P. and Fomin, Sergej V. and Sinai, Yakov G.},
title     = {Ergodic Theory},
year      = {1982},
journal    = {Grundlehren der mathematischen Wissenschaften},
volume    = {245},
number   = {1},
pages     = {227-291},
doi = {10.1007/978-1-4615-6927-5}
}

@article{DinhDupont2004,
author    = {Dinh, Tien-Cuong and Dupont, Christophe},
title     = {Dimension de la mesure d'équilibre d'applications méromorphes},
year      = {2004},
journal   = {Journal of Geometric Analysis},
volume    = {14},
number    = {4},
pages     = {613-627},
doi       = {10.1007/BF02922143}
}

@article{DinhSibony2010,
author    = {Dinh, Tien-Cuong and Sibony, Nessim},
title     = {Dynamics in several complex variables: endomorphisms of projective spaces and polynomial-like mappings},
year      = {2010},
journal    = {Lecture Notes in Mathematics},
volume    = {1998},
pages     = {165-294},
doi       = {10.1007/978-3-642-13171-4_4}
}

@article{Dupont2011,
author={Dupont, Christophe},
title={On the dimension of invariant measures of endomorphisms of $\mathbb{CP}^k$},
year={2011},
journal={Mathematische Annalen},
volume={349},
number={3},
pages={509-528},
doi = {10.1007/s00208-010-0519-1}
}

@article{DupontRogue2020,
author = {Dupont, Christophe and Rogue, Axel},
title = {Dimension of ergodic measures and currents on $\mathbb{CP}^2$},
year = {2020},
journal = {Ergodic Theory and Dynamical Systems},
volume = {40},
number = {8},
pages = {2131-2155},
doi = {10.1017/etds.2018.137}
}

@article{DupontTaflin2021,
author    = {Dupont, Christophe and Taflin, Johan},
title     = {Dynamics of fibered endomorphisms of $\mathbb{P}^k$},
year      = {2021},
journal   = {Annali della Scuola Normale Superiore di Pisa},
series    = {5},
volume    = {22},
number    = {4},
pages     = {1717-1754},
doi       = {10.2422/2036-2145.201911_008}
}

@article{FornaessSibony2001,
author = {Fornaess, John E. and Sibony, Nessim},
title = {Some open problems in higher dimensional complex analysis and complex dynamics},
year = {2001},
journal = {Publicacions Matemàtiques},
volume = {45},
number = {2},
pages = {529-547},
doi = {10.5565/PUBLMAT_45201_11}
}

@article{Jonsson1999,
author={Jonsson, Mattias},
title={Dynamics of polynomial skew products on $\mathbb{C}^2$},
year={1999},
journal={Mathematische Annalen},
volume={314},
number={3},
pages={403-447},
doi = {10.1007/s002080050301}
}

@article{JonssonVarolin2002,
author    = {Jonsson, Mattias and Varolin, Dror},
title     = {Stable manifolds of holomorphic diffeomorphisms},
year      = {2002},
journal   = {Inventiones mathematicae},
volume    = {149},
number    = {2},
pages     = {409-430},
doi       = {10.1007/s002220200220}
}

@article{KatokHasselblatt1995,
author={Katok, Anatole and Hasselblatt, Boris},
title={Introduction to the Modern Theory of Dynamical Systems},
year={1995},
journal={Cambridge University Press},
volume={54},
pages={660-666},
doi={10.1017/CBO9780511809187},
}

@article{Ledrappier1981,
author   = {Ledrappier, Fran{\c{c}}ois},
title    = {Some relations between dimension and {L}yapounov exponents},
year     = {1981},
journal  = {Communications in Mathematical Physics},
volume   = {81},
number   = {2},
pages    = {229-238},
doi = {10.1007/BF01208896}
}

@article{LedrappierYoung1985,
author = {Ledrappier, Fran{\c{c}}ois and Young, Lai-Sang},
title = {The metric entropy of diffeomorphisms. {II}: Relations between entropy, exponents and dimension.},
year = {1985},
journal = {Annals of Mathematics. Second Series},
volume = {122},
pages = {509-539},
doi = {10.2307/1971328}
}

@article{Mane1988,
author     = {Ma{\~n}{\'e}, Ricardo},
title      = {The {H}ausdorff dimension of invariant probabilities of rational maps},
year      = {1988},
journal = {Lecture Notes in Mathematics},
volume    = {1331},
pages     = {86-117},
doi       = {10.1007/BFb0083061}
}

@article{Manning1984,
author = {Anthony Manning},
title = {The Dimension of the Maximal Measure for a Polynomial Map},
year = {1984},
journal = {Annals of Mathematics},
volume = {119},
number = {2},
pages = {425-430},
doi = {10.2307/2007044}
}

@article{Pesin1998,
author  = {Pesin, Yakov B.},
title   = {Dimension Theory in Dynamical Systems: Contemporary Views and Applications},
journal = {Ergodic Theory and Dynamical Systems},
year    = {1998},
volume  = {18},
number  = {4},
pages   = {1043--1045},
doi     = {10.1017/S0143385798128298}
}

@article{Rokhlin1949,
title={On the fundamental ideas of measure theory},
author={Rokhlin, Vladimir A.},
year={1949},
journal={Matematicheskii Sbornik},
volume={67},
number={1},
pages={107-150}
}

@article{Rokhlin1967,
title={Lectures on the entropy theory of measure-preserving transformations},
author={Rokhlin, Vladimir A.},
year={1967},
journal={Russian Mathematical Surveys},
volume={22},
number={5},
pages={1-52},
doi = {10.1070/RM1967v022n05ABEH001224}
}

@article{DeThelinVigny2015,
author = {de Th{\'e}lin, Henry and Vigny, Gabriel},
title = {On the measures of large entropy on a positive closed current},
year = {2015},
journal = {Mathematische Zeitschrift},
volume = {280},
number = {3-4},
pages = {919-944},
doi = {10.1007/s00209-015-1456-1}
}

@article{Young1982,
author={Young, Lai-Sang},
title={Dimension, entropy and {L}yapunov exponents},
year={1982},
journal={Ergodic Theory and Dynamical Systems},
volume={2},
number={1},
pages={109–124},
doi={10.1017/S0143385700009615}
}

\noindent\footnotesize{Clara Genes\\Universit\'e de Rennes\\IRMAR - UMR CNRS 6625\\35042 Campus de Beaulieu, Rennes, France\\clara.genes@univ-rennes.fr}
\end{document}